\newtheorem{theorem}{Theorem}
\newtheorem{lemma}{Lemma}
\newtheorem{corollary}{Corollary}
\newtheorem*{theoremA}{Theorem A}
\newtheorem*{theoremB}{Theorem B}
\theoremstyle{definition}
\newtheorem{definition}{Definition}
\newtheorem{remark}{Remark}
\newtheorem{example}{Example}
\newcommand{\R}{\mathbb{R}}
\newcommand{\C}{{\mathbb{C}}}
\newcommand{\EP}{\widehat{\mathbb{C}}}
\newcommand{\D}{{\mathbb{D}}} 
\newcommand{\BP}{\mathcal{P}} 
\newcommand{\eps}{{\epsilon}}
\newcommand{\absv}[1]{\left\lvert{#1}\right\rvert}
\newcommand{\wto}{\stackrel{\mathrm{w}^*}{\to}}
\newcommand{\thmref}[1]{Theorem~\ref{#1}}
\DeclareMathOperator{\supp}{\mathrm{supp}}
\date{\today}
\begin{document}
\title[Zero Distr.\ of Derivatives of Centering Polynomial Families]%
{Zero Distributions of Derivatives of Polynomial Families
Centering on a Set}
\author{Henriksen, Christian}
\address{%
  Department of Applied Mathematics and Computer Science \\
  Technical University of Denmark}
\email{chrh@dtu.dk} 
\author{Petersen, Carsten Lunde}
\address{%
  Department of Science and Environment \\
  Roskilde University}
\email{lunde@ruc.dk}
\author{Uhre, Eva}
\address{%
  Department of Science and Environment \\
  Roskilde University}
\email{euhre@ruc.dk}
\begin{abstract}
  Suppose $C \subset \C$ is compact.
  Let $q_k$ be a sequence of polynomials of degree $n_k \to \infty$,
  such that the locus of roots of all the polynomials is bounded, and
  the number of roots of $q_k$ in any closed set $L$ not meeting $C$ is uniformly bounded.
  Supposing that $(q_k)_k$ has an asymptotic root distribution $\mu$ 
  we provide conditions on $C$ and $\mu$ assuring the sequence of $m$th derivatives
  $(q_k^{(m)})_k$ also has asymptotic root distribution $\mu$ for any $m\geq 1$. 
  This complements recent results of Totik, \cite{TotikTAMS2019}.
\end{abstract}

\maketitle
\noindent {\em \small 2020 Mathematics Subject Classification:
  Primary: 30C15, Secondary: 37F10, 42C05.}

\section{Introduction and results}

This paper studies the asymptotic root distribution of 
derivatives of sequences of polynomials. 
In recent years V.~Totik has under quite general conditions (for precise conditions see the discussion trailing Theorem B below) shown the following. 
If the root distributions of a sequence of polynomials converge weak* 
to a compactly supported probability measure $\mu$, 
then also the root distributions of derivatives converge weak* to $\mu$, 
\cite[Theorem 1.6 and Remark 1.12]{TotikTAMS2019}. 
Moreover, he proves an asymptotic Gauss-Lucas theorem:   
If a compact set $C$ contains most of the zeros of a polynomial sequence, 
then any neighborhood of the polynomial convex hull of $C$ contains most of 
the critical points of the polynomials 
\cite[Corollary 1.9]{TotikTAMS2019}.

Recently, Okuyama and Vigny \cite{OkuyamaVigny} have shown 
that preimages of any non-exceptional point (for a definition of exceptional point see below)
under the family of derivatives of iterates of any non-linear polynomial $q$ 
asymptotically equidistributes on the Julia set of $q$. 
Even though the conditions in Totik's theorems are quite general they do not 
apply to the case of families of polynomials obtained by iteration. 
This calls for extending Totik's theorems on the zero distributions of derivatives 
of sequences of polynomials to an even wider class of such sequences. 
This paper does that. 
We build upon and extend Totik's result in \cite{TotikTAMS2019} and obtain 
generalizations, which among other cases apply to sequences of polynomials 
obtained by iteration of a non-linear polynomial.

Moreover, our results have grown out of a quest to relate the properties of extremal families 
of polynomials to their Julia sets and filled-in Julia sets, 
called polynomial convex hull of the Julia sets in potential theory. 
Extremal families of polynomials are central and of great interest  
in many areas of mathematics, to the extent 
where they have become research areas in their own right.  
Examples of extremal families are the sequence of
orthonormal polynomials associated to a compactly supported probability measure on $\C$, 
the sequence of Chebyshev polynomials associated to a compact set in $\C$, 
sequences of Fekete polynomials associated to a compact set, etc. 
A central theme of this ongoing research is to find and prove asymptotic properties 
of the polynomials such as the asymptotic root distribution, 
see e.g.\ \cite{StahlTotik} for a general introduction, and \cite{CHPPOrtho}, \cite{CHPPCheby}, \cite{PU}, \cite{BayraktarEfe}, \cite{CSZ1} for more recent results.

A non-linear polynomial $q(z) \in \C[z]$ defines a dynamical system 
with orbits given by the sequence of iterates 
$q$, $q^2 = q\circ q$, $q^3=q\circ q^2$, $\ldots$.
The sequence of iterates divides $\C$ into the Fatou set $F(q)$, 
where the iterates form a normal family in the sense of Montel
and the compact non-polar complement, the Julia set $J(q)$, 
where the dynamics is chaotic.
The principal questions in holomorphic dynamics are to 
understand the asymptotic behavior of orbits, 
the geometry of $F(q)$ and $J(q)$ and
the geometry of associated parameter spaces. 
See e.g. the books of Milnor \cite{Milnor} or Steinmetz \cite{Steinmetz}
for an introduction to the field. 
We shall also use the exceptional set $E(q)$, 
which is the maximal finite set satisfying $E(q) = q^{-1}(E(q))$. 
Since $J(q)\not=\emptyset$, Montel's theorem implies $E(q)$ is either empty 
or contains at most one point.

For a non-polar polynomially convex compact set $K\subset\C$ 
denote by $\omega_K$ the equilibrium measure on $K$, 
it has support $\supp(\omega_K) =\partial K$. 

Brolin's classical result, \cite{Brolin} relates dynamics to value distribution.
It states that the preimages under $q^k$ of any non-exceptional point 
are asymptotically distributed according to the equilibrium measure on $J(q)$ as $k \to \infty$,
\[ \frac{(q^k)^*\delta_a}{n^k} \wto \omega, \]
Here $\delta_a$ is the Dirac point mass at $a$, and $(q^k)^*\delta_a$
is the pull-back measure, so that $(q^k)^*\delta_a(U)$ is the number
of zeros of $q^k-a$ in $U$ counted with multiplicity.
In particular if $q$ is not of the form $q(z) = \gamma z^d$, 
the zeros of $q^k$ are asymptotically $\omega$-distributed.

The recent results of Okuyama and Vigny (in \cite{Okuyama1} and \cite{OkuyamaVigny})
on value distribution of derivatives of the sequence of iterates,
\[
  \forall\; a \in \C^*\qquad
  \frac{((q^k)^{(m)})^*\delta_a}{n^k-m} \wto \omega, \text{ as $k \to\infty$.
  }
\]
can to a large degree be attributed to the sequence of iterates having a property, 
which we term \emph{centering} and which is related to Totik's notion 
of a set containing most of the zeros.
(see Definition \ref{def:center} and the trailing discussion below).

Here and henceforth $q_k$ is a polynomial with leading term
$\gamma_k z^{n_k}$, and for a sequence of polynomials $(q_k)_k$ 
we suppose that the degrees $n_k$ tend to infinity. 

For $S \subset \C$ we denote by $\BP(S)$ the set of Borel
probability measures supported in $S$.
If $S$ is compact, this space is sequentially compact with respect
to weak-star convergence.

For a non-zero polynomial $q(z) = \gamma \prod_{j=1}^n (z - z_j)$, we
let $\mu_q$ denote the root distribution
\[
  \mu_q = \frac{1}{n} \sum_{j=1}^n \delta_{z_j} =
  \frac{q^*\delta_0}{n} .
\]
In particular, for a set $A \subset \C$,
$n \mu_q(A)$ denotes the number of roots of $q$ in $A$ counted with
multiplicity.

By assumption, the degrees $n_k$ of the sequence of polynomials
$(q_k)_k$ tend to infinity, so for $k$ sufficiently big
$q_k$ is not the zero-polynomial.
Thus we have a sequence $(\mu_{q_k})_k$ of root distributions. 
To alleviate the notation we let $\mu_k = \mu_{q_k}$.
We say the sequence $(q_k)_k$ has asymptotic root distribution $\mu$ 
if and only if $\mu_k \wto \mu$.
We denote by $\mu_k^m$ the sequence of root distributions 
of the sequence of $m$th derivatives $(q_k^{(m)})$.

Let $S = \supp(\mu)$ denote the support of the asymptotic root
distribution.
For $L \in \C$ closed and not meeting $S$, it follows by weak-star
convergence, that the number of zeros of $q_k$ in $L$ counted with
multiplicity is $o(n_k)$.
However, for many families of interest, e.g.~sequences of extremal polynomials and 
iterates of a non-linear polynomial the number of
zeros of $q_k$ in $L$ is uniformly bounded, 
or in other words the number of roots in $L$ is $O(1)$ as $k \to \infty$.
This leads us to the following definition.

\begin{definition}\label{def:center}
  Let $C \subset \C$ be non-empty and compact.
  We say that $(q_k)_k$ \emph{centers} on $C$ if there exist $R > 0$
  and $k_0$ such that
  \begin{enumerate}
    \item All zeros of $q_k$ are contained in the disk $\D(0, R)$
      for all $k \geq k_0$.
    \item If $L \subset \C$ is closed and $L \cap C = \emptyset$,
      then there exists $M = M(L)$ such that the number of zeros of $q_k$ in
      $L$ is bounded by $M$ when $k \geq k_0$.
  \end{enumerate}
\end{definition}

Observe that if $(q_k)_k$ centers on $C$, then $(\mu_k)_k$ 
is precompact, and every limit measure belongs to $\BP(C)$. 

Totik defines a related notion, namely the sequence $(q_k)_k$ 
\emph{has most of its zeros} in $C$ if and only if $\mu_k(C)\to 1$ 
or equivalently $\mu_k(\C\setminus C) = o(1)$. 
This seems like a strong condition, which a priori is hard to check. 
Centering on the other hand may look strong, 
but it holds for extremal sequences such as the sequence 
of orthogonal polynomials for any probability measure with support $C$,  
the sequence of Chebyshev polynomials on $C$ and 
the sequence of iterates of a non-linear polynomial $q$ on 
$C=J(q)$ (see also Examples \ref{iteration_centers_on_J(q)} and 
\ref{extremal_polynomials_centers_on_filled-in_support}). 
Whereas the two conditions, centering on $C$ and having most of its zeros in $C$, 
have the same flavor and certainly have an overlap, 
they are neither equivalent, nor is one stronger than the other. 
Indeed for a quadratic polynomial $q(z) =z^2+c$ with $|c|>2$ 
the sequence of zeros of the iterates $q^k$ centers on $J(q)$, 
but $\mu_k(J(q)) = 0$ for all $k$, see also Example~\ref{iteration_centers_on_J(q)}. 
Thus centering on $C$ does not imply that most of the zeros are in $C$.
Conversely if $C$ is non-empty and compact, $z_0\notin C$ 
and $q_k$ is a polynomial sequence with most of the zeros in $C$ 
and each $q_k$ admits $z_0$ as a zero of order $\lfloor\sqrt{n_k}\rfloor$. 
Then the sequence does not center on $C$ even though most of its zeros are in $C$.

Centering on a polynomially convex compact set 
is a property that is passed on to the sequence of derivatives 
(and by induction the $m$'th derivatives).
We call such properties \emph{hereditary}.

\begin{theoremA}\label{Centering_is_heriditary}
  Suppose $K\subset \C$ is compact and polynomially convex.
  Then centering on $K$ is hereditary.
  That is, if $(q_k)_k$ centers on $K$, then
  the sequence of $m$th derivatives $(q_k^{(m)})_k$ also
  centers on $K$, for $m = 1, 2, \ldots$.
\end{theoremA}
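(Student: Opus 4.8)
The plan is to reduce the statement to the case $m=1$ by induction: once we know that centering on $K$ passes to the first derivative (and that $K$ remains polynomially convex, which it is by hypothesis), we can apply the $m=1$ case repeatedly to $q_k, q_k', q_k'', \ldots$ to reach the $m$th derivative. So the heart of the matter is: if $(q_k)_k$ centers on $K$, then $(q_k')_k$ centers on $K$.

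For the $m=1$ case there are two conditions to verify. Condition (1), that the zeros of $q_k'$ lie in a fixed disk $\D(0,R')$ for large $k$, is immediate from the Gauss--Lucas theorem: the critical points of $q_k$ lie in the convex hull of its zeros, which is contained in $\D(0,R)$ by condition (1) for $(q_k)_k$, so we may take $R' = R$. Condition (2) is the real work. Fix a closed set $L$ with $L \cap K = \emptyset$; we must bound the number of zeros of $q_k'$ in $L$ independently of $k$. First I would reduce to $L$ compact: since all critical points lie in $\D(0,R)$, it suffices to bound the zeros of $q_k'$ in $L \cap \overline{\D(0,R)}$, a compact set disjoint from $K$. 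Cover this compact set by finitely many small closed disks $\overline{\D(a_i, r_i)}$, each still disjoint from $K$; it then suffices to bound, uniformly in $k$, the number of critical points of $q_k$ in each such disk.

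The key step is therefore a local one: given a closed disk $\bar D = \overline{\D(a,r)}$ disjoint from $K$, bound the number of zeros of $q_k'$ in $\bar D$. Here I would use the argument principle together with an enlarged disk. Choose $\rho > r$ so that $\overline{\D(a,\rho)}$ is still disjoint from $K$ and so that the circle $\Gamma = \partial \D(a,\rho)$ can be taken to avoid the (at most $M(\overline{\D(a,\rho)})$, a $k$-independent bound) zeros of $q_k$ inside it — more carefully, one picks a radius in a fixed interval $[\rho_1,\rho_2]$ on which a suitable lower bound for $|q_k|$ holds. The number of zeros of $q_k'$ in $\D(a,r)$ is at most the number of zeros of $q_k'$ in $\D(a,\rho)$, which by the argument principle equals $\frac{1}{2\pi i}\int_\Gamma \frac{q_k''}{q_k'}\,dz$, but the cleaner route is to estimate $\frac{1}{2\pi i}\int_\Gamma \frac{q_k'}{q_k}\,dz$, whose value is (zeros of $q_k'$) $-$ (zeros of $q_k$) inside $\Gamma$; since the zeros of $q_k$ inside $\Gamma$ are bounded by a constant $M$, it suffices to bound the integral. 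On $\Gamma$, far from $K$, one has $q_k'/q_k = \sum_j 1/(z-z_j) = n_k \int d\mu_k(w)/(z-w)$, and the danger is the roughly $n_k - O(1)$ zeros $z_j$ lying near $K$: their contribution to $q_k'/q_k$ is $\int$ over a region at definite distance from $\Gamma$, hence the integrand is bounded there, but summing gives something of size $O(n_k)$, which is too big. The fix is to observe that this main-term contribution, $\frac{1}{2\pi i}\int_\Gamma \big(\sum_{z_j \text{ near } K} \frac{1}{z-z_j}\big)dz$, is exactly the number of such $z_j$ inside $\Gamma$, which is $0$ since they are all outside $\Gamma$; so only the $O(1)$ remaining zeros (those in $\overline{\D(a,\rho)}$) and boundary effects contribute, and the argument-principle bookkeeping closes up, yielding a bound depending only on $\bar D$ and $K$, not on $k$.

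The main obstacle is making the circle-selection uniform in $k$: one needs a radius $\rho \in [\rho_1,\rho_2]$ for which $q_k$ has no zeros too close to $\Gamma$ and ideally a usable lower bound for $|q_k|$ on $\Gamma$, and this must be arranged by a pigeonhole/measure argument that works simultaneously for all large $k$ using only the $k$-independent bound $M$ on the number of zeros in the annular region. I expect the cleanest formulation to avoid lower bounds on $|q_k|$ altogether by working directly with the counting-function identity: the number of zeros of $q_k'$ in $\D(a,\rho)$ minus the number of zeros of $q_k$ there is controlled by the variation of the argument of $q_k$ along $\Gamma$ relative to the "far" zeros, and since removing the near-$K$ zeros from $q_k$ changes $\arg q_k$ along $\Gamma$ by a bounded amount (the far factors being close to constant direction is false — rather, their product winds zero times around $0$ as seen from $\Gamma$ because they are all outside), one gets the desired uniform bound. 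This winding-number viewpoint is where I would concentrate the careful writing.
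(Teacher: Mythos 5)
Your reduction to $m=1$ by induction, the use of Gauss--Lucas for condition (1) of centering, and the localization to finitely many small disks are all fine and consistent with the paper. The gap is in the key analytic step, and it begins with a misstatement of the argument principle: $\frac{1}{2\pi i}\int_\Gamma \frac{q_k'}{q_k}\,dz$ equals the number of zeros of $q_k$ inside $\Gamma$, not the difference (zeros of $q_k'$) minus (zeros of $q_k$); that difference is the winding number of the quotient $q_k'/q_k$ about the origin, i.e.\ $\frac{1}{2\pi i}\int_\Gamma \bigl(\frac{q_k''}{q_k'}-\frac{q_k'}{q_k}\bigr)dz$. Consequently your observation that the ``main-term contribution'' $\frac{1}{2\pi i}\int_\Gamma \bigl(\sum_{z_j\ \mathrm{near}\ K}\frac{1}{z-z_j}\bigr)dz$ vanishes only re-proves that $q_k$ has $O(1)$ zeros inside $\Gamma$, which is the hypothesis; it gives no information about the zeros of $q_k'$.

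More fundamentally, no purely local winding-number bookkeeping of the kind you describe can close, because your analytic step never uses polynomial convexity of $K$, and the statement is false without it: take $K=\partial\D$ and $q_k(z)=z^k-1$, which centers on $\partial\D$ (all zeros lie on the unit circle), yet $q_k'(z)=kz^{k-1}$ has $k-1$ zeros at the origin, so the winding number of $q_k'/q_k$ along $\partial\D(0,\tfrac12)$ equals $k-1$ and is unbounded, while every quantity you control ($M$ and the ``main-term'' integral) stays bounded. The missing ingredient is the limit potential. The paper's route (Lemma~\ref{lem:weakuniform} and Theorem~\ref{thmCenter}) is: after a diagonal argument isolating the finitely many limit points of stray roots, $\frac{q_k'}{n_kq_k}=\int\frac{d\mu_k(w)}{z-w}$ converges locally uniformly off $\supp\mu$ and off those limit points to the holomorphic function $p_\mu'=2\partial_z p_\mu$; polynomial convexity makes $\C\setminus K$ connected, so $p_\mu$, being $\log\absv{z}+o(1)$ at infinity, is non-constant and hence nowhere locally constant there, so $p_\mu'$ has only finitely many zeros on compact subsets of the complement. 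Rouch\'e applied to $\frac{q_k'}{n_kq_k}$ versus $p_\mu'$ on small disks then yields (zeros of $q_k'$) $=$ (zeros of $q_k$) $+$ (zeros of $p_\mu'$) in each disk, which is the uniform bound you are after. You need to bring $p_\mu$ and the connectedness of $\C\setminus K$ into the argument; the local argument principle alone cannot distinguish $\partial\D$ from $\overline{\D}$.
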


According to the Gauss-Lucas Theorem the root locus of $q_k'$ is 
contained in the convex hull of the root locus of $q_k$, 
as long as $q_k'$ is not the zero polynomial.
Theorem~\ref{Centering_is_heriditary} can be viewed as an asymptotic Gauss-Lucas result 
for derivatives of all orders. 
It should be compared to the asymptotic Gauss-Lucas theorem of Totik, 
\cite[Corollary 1.9]{TotikTAMS2019}, which is non-hereditary. 
Indeed for any quadratic polynomial $q(z) = \lambda z + z^2$ with 
$|\lambda|>4$ all zeros of $q^k$ for any $k\geq 0$ belong to $J(q)$, 
but no critical point belongs to $J(q)$. 

Let $S\subset \C$ be compact and suppose $\mu \in \BP(S)$.
Then the \emph{potential} $p_\mu$ of $\mu$ is defined as
\[
  p_\mu(z) = \int \log \absv{z-w}\,d\mu(w),
\]
following the sign convention of Ransford, see 
\cite[Definition 3.1.1]{Ransford}.
It follows from the definition that $p_\mu$ is subharmonic on $\C$ and harmonic on
$\C \setminus S$.
And additionally,
\[
  p_\mu(z) = \log\absv{z} + O(\absv{z}^{-1}) \text{ as }z\to \infty,
\]
see also \cite[Theorem 3.1.2]{Ransford}.
We say that $p_\mu$ is \emph{nowhere locally constant} in the
open set $U \subset \C$, if the restriction of $p_\mu$ to any
non-empty open subset of $U$ is non-constant.
Additionally, when $p_\mu$ is nowhere locally constant in all of $\C$,
we just say that $p_\mu$ is nowhere locally constant.

Notice that if $p_\mu$ is harmonic in a domain $U$, then it is either
constant on $U$ or nowhere locally constant in $U$.
So on such a domain, nowhere locally constant is the same as
non-constant.

We introduce the following class of measures.
\begin{definition}
  Let $\mu$ be a Borel probability measure with compact support $S \subset \C$ 
  and denote the components of the interior of $S$ by $(W_l)_l$.
  We say that $\mu$ is \emph{noble} if
  we can divide the connected components of $\C \setminus S$
  into two families $(U_i)_i$ and $(V_j)_j$ such that
  \begin{enumerate}
    \item\label{nobel1} the family $(U_i)_i$ contains the unbounded
      component $U_1 = \Omega$, 
    \item\label{nobel2} the potential $p_\mu$ is non-constant on every $U_i$,
    \item\label{nobel3} every $z$ in $\bigcup_j \partial V_j$ is contained in the closure
      of a non-trivial connected component of 
      $\bigcup_i \overline{U}_i \cup \bigcup_l \overline{W}_l$,
    \item\label{nobel4} $\mu(\bigcup_j \overline{V}_j) = 0$ and
    \item\label{nobel5} $m_2(\partial S
      \setminus \left( \bigcup_i \partial U_i
      \cup \bigcup_j \partial V_j
      \cup \bigcup_l \partial W_l \right)) = 0$.
  \end{enumerate}
\end{definition}
Here a non-trivial component is a component containing more
than one point.
Also, $m_2$ denotes the two-dimensional Lebesgue measure, i.e.,
$m_2(A)$ denotes the planar area of $A$.

Care has to be taken with requirement \ref{nobel3}.
If we let $A$ denote the set of points, that are contained in the closure
of a non-trivial connected component of
$\bigcup_i \overline{U}_i \cup \bigcup_l \overline{W}_l$, then
$\bigcup_i \overline{U}_i \cup \bigcup_l \overline{W}_l \subset A
\subset \overline{\bigcup_i U_i \cup \bigcup_l W_l}$, where
both inclusions can be strict.
So requirement \ref{nobel3} is weaker than requiring
every $z \in \bigcup_j \partial V_j$ is contained in the closure
$\overline{\bigcup_i U_i \cup \bigcup_l W_l}$, and we shall show how this can be useful
(in example \ref{ex:circles}).

Since the division of components into $(U_i)_i$ and $(V_j)_j$
is not necessarily unique, we can be more specific and say
$\mu$ is noble with respect to the division $(U_i)_i$ and $(V_j)_j$.

Notice that $\C \setminus S$ has exactly one unbounded component $\Omega$,
and since $p_\mu(z) = \log\absv{z} + o(1)$ as $z\to \infty$,
we have that $p_\mu$ is non-constant on $\Omega$.
However, we do not require $(U_i)_i$ to include all the components
of $\C \setminus S$ where $p_\mu$ is non-constant.

The last piece of notation we need is that of noble polynomial
sequences.

\begin{definition}
  Suppose $\mu \subset \BP(S)$ is noble with respect to the division
  $(U_i)_i$ and $(V_j)_j$, and set $C = \C \setminus \bigcup_i U_i$.
  We say that $(q_k)_k$ is \emph{noble},
  (with respect to $(\mu, (U_i)_i, (V_j)_j)$)
  if $(q_k)_k$ centers on $C$ and $\mu_k \wto \mu$.
\end{definition}

Our next theorem states that nobility is hereditary.
That is, if $(q_k)_k$ is noble, then so is $(q_k')_k$.

\begin{theoremB}
  Let $S \subset \C$ be compact, and suppose $\mu \in \BP(S)$
  is noble with respect to a division $(U_i)_i$, $(V_j)_j$.
  If $(q_k)_k$ is noble with respect to $(\mu, (U_i)_i, (V_j)_j)$,
  then $(q_k')_k$ is also noble with respect to
  $(\mu, (U_i)_i, (V_j)_j)$.
  In particular, for all $m = 0, 1, \ldots$ we have
  \begin{equation}\label{same_limit_measure} 
  \mu_k^m \wto \mu.
  \end{equation}
\end{theoremB}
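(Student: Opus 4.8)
The plan is to show nobility passes from $(q_k)_k$ to $(q_k')_k$; the rest then follows by induction on $m$, since $\mu$ itself is fixed throughout and does not change under differentiation. The fact that $\mu$ is noble with respect to $(U_i)_i$, $(V_j)_j$ is a hypothesis and is preserved trivially. So the work is entirely on the polynomial side: we must verify that $(q_k')_k$ centers on $C = \C \setminus \bigcup_i U_i$ and that $\mu_k^1 \wto \mu$. Since $C$ is the complement of a union of domains including the unbounded one, and (by requirement \ref{nobel5} together with the structure of a noble measure) $C$ should be polynomially convex — this is where Theorem A enters — centering of $(q_k')_k$ on $C$ is immediate from \thmref{Centering_is_heriditary}. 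First I would record this and reduce the theorem to the single assertion $\mu_k' \wto \mu$, i.e.\ that the root distributions of the derivatives have the \emph{same} asymptotic limit $\mu$ (not merely some limit in $\BP(C)$, which centering alone gives).

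To prove $\mu_k' \wto \mu$ I would argue by precompactness and uniqueness of the limit. By the observation following Definition \ref{def:center}, $(\mu_k')_k$ is precompact with every subsequential limit $\nu \in \BP(C)$; it suffices to show $\nu = \mu$ for every such $\nu$. The standard tool is the logarithmic potential: $\mu_k' \wto \nu$ implies $p_{\mu_k'} \to p_\nu$ in $L^1_{\mathrm{loc}}$ and locally uniformly off $\supp \nu$, and similarly $p_{\mu_k} \to p_\mu$. Now the key identity linking $q_k$ and $q_k'$ is
\[
  p_{\mu_k'}(z) - p_{\mu_k}(z) \;=\; \frac{1}{n_k}\log\absv{q_k'(z)/q_k(z)} + \frac{\log(n_k/\absv{\gamma_k})\cdot(\text{deg correction})}{n_k},
\]
more precisely $\frac{1}{n_k-1}\log|q_k'(z)| - \frac{1}{n_k}\log|q_k(z)| \to p_\nu(z) - p_\mu(z)$ suitably interpreted, and the difference $q_k'/q_k = \sum_j 1/(z-z_{j,k})$ is controlled where $q_k$ is large. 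On the unbounded component $U_1 = \Omega$, and more generally on each $U_i$, one shows $p_\nu = p_\mu$: away from $C$ the roots of $q_k$ are $O(1)$ in number, so $\log|q_k|/n_k$ and $\log|q_k'|/(n_k-1)$ both converge to the same harmonic function (they differ by $O(1/n_k)$ times a bounded-in-number logarithmic sum plus vanishing normalization terms). Hence $p_\nu - p_\mu$ is harmonic on $\bigcup_i U_i$ and vanishes there; by requirement \ref{nobel2} this pins it down. Then requirements \ref{nobel3} and \ref{nobel4} are used to propagate the equality $p_\nu = p_\mu$ across the $\partial V_j$ and onto the $\overline{V}_j$ — since $\mu$ puts no mass on $\bigcup_j \overline{V}_j$, $p_\mu$ is harmonic there, and $p_\nu - p_\mu$ is subharmonic, $\leq 0$ by a maximum-principle argument using that it vanishes on the boundary pieces coming from the $\overline{U}_i$ and $\overline{W}_l$, and $\geq 0$ by the symmetric argument or by $\nu(\overline{V}_j)=0$. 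Finally requirement \ref{nobel5} handles $\partial S$: the residual part of $\partial S$ not already covered has zero area, hence the two potentials, equal off a set of zero area and both in the relevant Sobolev class, agree everywhere, giving $\nu = \mu$ by unique determination of a compactly supported measure by its potential (the distributional Laplacian).

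The main obstacle I expect is the propagation step across $\bigcup_j \partial V_j$ and the handling of the interior components $(W_l)_l$: one must show $p_\nu = p_\mu$ on each $\overline{V}_j$ using only the somewhat delicate requirement \ref{nobel3} — that each boundary point of $V_j$ lies in the closure of a \emph{non-trivial} connected component of $\bigcup_i \overline{U}_i \cup \bigcup_l \overline{W}_l$ — rather than the cleaner (false in general) statement that $\partial V_j \subset \overline{\bigcup_i U_i \cup \bigcup_l W_l}$. The subtlety flagged in the paper (the set $A$ with its strict inclusions) means the maximum-principle argument cannot simply take boundary values on $\partial V_j$; instead one needs that $p_\nu - p_\mu$, being subharmonic on the larger region $\mathrm{int}(\overline{V}_j) \cup (\text{adjacent } U_i, W_l)$ where it is already known to vanish on an open part, is forced to vanish on all of $\overline{V}_j$ by non-triviality of the connecting components (a single point of contact would not suffice, but an arc does, via e.g.\ continuity of the potential and the identity theorem for the harmonic function $p_\nu - p_\mu$ on the component where it is harmonic). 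Controlling the analogous contribution of the $W_l$'s — where $\mu$ may have mass and the potentials need not be harmonic — and checking polynomial convexity of $C$ so that Theorem A genuinely applies, are the remaining technical points.
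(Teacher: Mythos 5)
Your overall strategy --- precompactness, a subsequential limit $\nu$, equality of potentials $p_\nu=p_\mu$ on the $U_i$, propagation across the $V_j$ by a gluing/maximum-principle argument using \ref{nobel3}--\ref{nobel5}, and unique determination of $\nu$ by its potential --- is the paper's strategy. But your centering step has a genuine gap. You claim $C=\C\setminus\bigcup_i U_i$ is polynomially convex and invoke \thmref{Centering_is_heriditary}. This is false in general: polynomial convexity of $C$ means $\C\setminus C=\bigcup_i U_i$ is connected, which fails as soon as $(U_i)_i$ contains a bounded component --- exactly the situation of Corollary~\ref{cor:nowhereconst}, where $(U_i)_i$ consists of \emph{all} components of $\C\setminus S$ and $C=S$ (e.g.\ $S=\partial\D$). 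Applying Theorem~A to the polynomial hull of $C$ only yields centering on $\C\setminus\Omega$, which says nothing about the number of zeros of $q_k'$ in the bounded $U_i$; yet it is precisely on those $U_i$ that you later need $\frac{1}{n_k}\log\absv{q_k'}$ to converge off a small exceptional set. The paper instead uses \thmref{thmCenter}: for closed $A\subset\bigcup_i U_i$, requirement \ref{nobel2} guarantees $p_\mu$ is non-constant, hence (being harmonic) nowhere locally constant, on a neighborhood of $A$, and \eqref{eq:upper} then bounds the zeros of $q_k'$ in $A$. So \ref{nobel2} is essential input to the centering step, not merely to ``pinning down'' a difference that you say already vanishes.

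A second, smaller defect: your justification that $p_\nu=p_\mu$ on $U_i$ --- the roots of $q_k$ in $U_i$ are $O(1)$ in number, so the two normalized log-moduli converge to the same harmonic function --- is a non sequitur; scarcity of roots of $q_k$ does not by itself control $\log\absv{q_k'}$. The working argument is: $\frac{q_k'}{n_kq_k}\to p_\mu'$ locally uniformly off the finitely many limit roots (Lemma~\ref{lem:weakuniform}); at points where $p_\mu'(z)\neq 0$ this gives $\frac{1}{n_k}\log\absv{q_k'}-\frac{1}{n_k}\log\absv{q_k}\to 0$; requirement \ref{nobel2} ensures $p_\mu'\not\equiv 0$ on $U_i$, so the excluded set $E$ is discrete, $U_i\setminus E$ is non-thin at every point of $\overline{U}_i$, and hence $p_\nu=p_\mu$ on $\overline{U}_i$. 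Finally, you flag but do not handle the interior components $W_l$; the paper imports Totik's argument (step~IV of the proof of his Theorem~1.6) for $p_\nu=p_\mu$ there, and the propagation across the $V_j$ that you sketch is exactly Lemma~\ref{lem:subharmEqualEverywhere}, which formalizes the gluing/Weyl's-lemma argument.
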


\begin{remark}
Let us compare Theorem B to Totik's theorem \cite[Theorem 1.6]{TotikTAMS2019}. 
Totik divides the boundary of $S=Supp(\mu)$ into two not necessarily disjoint sets
$$
\partial_{outer}S :=\partial\Omega\qquad\textrm{and}\qquad
\partial_{inner}S :=\partial\left( \bigcup_i U_i\cup \bigcup_j V_j\right).
$$ 
and proves \eqref{same_limit_measure} under the hypothesis that $\mu(\partial_{inner}S) = 0$. 
There are many interesting examples, 
where the hypothesis of \cite[Theorem 1.6]{TotikTAMS2019} is not satisfied, 
but Theorem B applies to yield the desired conclusion. See e.g.~the examples below.
\end{remark}
\begin{remark}
  Whether it is advantageous to include a component of $\C\setminus S$ 
  in the family $(U_i)_i$ or the family $(V_j)_j$
depends on the circumstances, as shown by the following two corollaries.
In the first we take $(U_i)_i$ to include every component,
in the second we take $(U_i)_i = (\Omega)$.
\end{remark}

\begin{corollary}\label{cor:nowhereconst}
  Let $\hat{\Omega} \in \EP$ be a domain containing
  $\infty$, and set $K = \EP \setminus \hat{\Omega}$
  and $S = \partial K = \partial \hat{\Omega}$.
  Suppose $(q_k)_k$ centers on $S$ and $\mu_k \wto \mu$,
  for some $\mu \in \BP(S)$.
  If $p_\mu$ is nowhere locally constant,
  then $(q_k^{(m)})_k$ centers on $S$ and satisfies
  \[
    \mu^m_k \wto \mu,
  \]
  for all $m > 0$.
\end{corollary}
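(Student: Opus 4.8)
The plan is to obtain \corref{cor:nowhereconst} directly from Theorem~B, the only real work being the choice of the division of the components of $\C\setminus S$. I would take $(U_i)_i$ to be the family of \emph{all} connected components of $\C\setminus S$, with $U_1=\Omega$ the unbounded one, and take $(V_j)_j$ to be empty. Then $C:=\C\setminus\bigcup_i U_i=S$, so the two hypotheses ``$(q_k)_k$ centers on $S$'' and ``$\mu_k\wto\mu$'' say precisely that $(q_k)_k$ is noble with respect to $(\mu,(U_i)_i,(V_j)_j)$---once we know that $\mu$ itself is noble with respect to this division. So the whole argument reduces to verifying that $\mu$ is noble: Theorem~B then gives that $(q_k')_k$ is noble with respect to the same triple, and since the triple is unchanged and $\mu$ stays noble with respect to it, iterating yields that $(q_k^{(m)})_k$ is noble with respect to it for every $m\geq 1$; as ``noble'' for a polynomial sequence means exactly ``centers on $C=S$ and has the corresponding root distribution converging to $\mu$'', this is the claimed conclusion, the convergence $\mu_k^m\wto\mu$ moreover being explicitly part of the statement of Theorem~B.

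Two elementary observations about $S$ make the verification of nobility immediate. First, since $K$ is closed, its boundary $S=\partial K$ has empty interior; hence the family $(W_l)_l$ of components of $\operatorname{int}(S)$ is empty and $\partial S=S$. Second, $\Omega=\hat\Omega\cap\C$ is the (unique) unbounded component of $\C\setminus S$, and since $\infty$ is an interior point of $\hat\Omega$, the boundary of $\Omega$ relative to $\C$ equals $\partial\hat\Omega=\partial K=S$. In particular $\partial U_1=\partial\Omega=\partial S=S$.

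Now I would check the five requirements. Requirement~\ref{nobel1} holds by construction. For requirement~\ref{nobel2}: each $U_i$ lies in $\C\setminus S\subseteq\C\setminus\supp(\mu)$, so $p_\mu$ is harmonic on $U_i$, and the hypothesis that $p_\mu$ is nowhere locally constant forces its restriction to the nonempty open set $U_i$ to be non-constant. Requirements~\ref{nobel3} and~\ref{nobel4} are vacuous, since $(V_j)_j$ is empty and $\mu(\emptyset)=0$. Finally, for requirement~\ref{nobel5}, using $\partial S=S=\partial U_1$ and $(W_l)_l=(V_j)_j=\emptyset$,
\[
  \partial S\setminus\Bigl(\bigcup_i\partial U_i\cup\bigcup_j\partial V_j\cup\bigcup_l\partial W_l\Bigr)
  \;\subseteq\;\partial S\setminus\partial U_1\;=\;S\setminus S\;=\;\emptyset ,
\]
which has two-dimensional Lebesgue measure $0$. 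Hence $\mu$ is noble with respect to $(U_i)_i,(V_j)_j$, and Theorem~B applies, completing the proof as outlined.

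I do not foresee a genuine obstacle: essentially all of the analytic content is carried by Theorem~B, and this corollary amounts to bookkeeping. The one point to get right is the topological remark that, $S$ being a topological boundary, it has empty interior and is recovered as $\partial\Omega$; this is exactly what collapses requirement~\ref{nobel5} to the empty set, so that here---in contrast with the companion corollary---no real estimate on the area of a piece of $\partial S$ is needed. A minor point to spell out is the legitimacy of the induction on $m$: after one application of Theorem~B the sequence $(q_k')_k$ is noble with respect to the \emph{same} triple $(\mu,(U_i)_i,(V_j)_j)$ for which $\mu$ is noble, so Theorem~B may be reapplied verbatim to $(q_k')_k$, $(q_k'')_k,\dots$.
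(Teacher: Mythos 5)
Your proposal is correct and is exactly the paper's argument: the paper likewise takes $(U_i)_i$ to be all components of $\C\setminus S$ with $(V_j)_j$ empty, notes the nobility requirements are "readily verified," and invokes Theorem~B. Your write-up merely supplies the (correct) routine verifications, including the observation that $\partial K$ has empty interior so that requirement~\ref{nobel5} collapses.
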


\begin{corollary}\label{cor:nullinnerboundary}
  Let $\hat{\Omega}$, $K$ and $S$ be as in the previous corollary
  and suppose $(q_k)_k$ centers on $K$ and $\mu_k \wto \mu$,
  for some $\mu \in \BP(K)$.
  If $\mu(\partial V) = 0$ for each bounded component $V$ of $\C \setminus S$,
  then $(q_k^{(m)})_k$ centers on $K$ and satisfies
  \[
    \mu^m_k \wto \mu,
  \]
  for all $m > 0$.
\end{corollary}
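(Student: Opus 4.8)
The plan is to derive the two assertions from Theorem~A and Theorem~B respectively. Since $\hat{\Omega}=\EP\setminus K$ is a domain, the complement of $K$ in the sphere is connected, so $K$ is polynomially convex; hence Theorem~A applies directly and gives that $(q_k^{(m)})_k$ centers on $K$ for every $m\geq 1$. By the remark following Definition~\ref{def:center} this already makes $(\mu_k^m)_k$ precompact with every weak-* limit lying in $\BP(K)$, so all that remains is to identify each such limit as $\mu$, which I would extract from Theorem~B.

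To apply Theorem~B I must produce a division of the components of $\C\setminus\supp(\mu)$ with respect to which $\mu$, and then the sequence, is noble. Following the recipe in the remark preceding the corollary, I take $(U_i)_i$ to consist of the single unbounded component $\Omega$ of $\C\setminus\supp(\mu)$ and $(V_j)_j$ to be all of the bounded components; then $C=\C\setminus\bigcup_i U_i=\C\setminus\Omega$ is the polynomial hull of $\supp(\mu)$, which is contained in $K$. Conditions~(\ref{nobel1}) and~(\ref{nobel2}) hold at once: $\Omega\in(U_i)_i$ by construction, and $p_\mu$ is non-constant on $\Omega$ because $p_\mu(z)=\log\absv{z}+o(1)$ as $z\to\infty$. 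Condition~(\ref{nobel4}) is precisely where the hypothesis of the corollary is used: each bounded $V_j$ is disjoint from $\supp(\mu)$, so $\mu(V_j)=0$, while $\mu(\partial V_j)=0$ by hypothesis, whence $\mu(\overline{V_j})=0$ and, summing over the countably many $j$, $\mu\bigl(\bigcup_j\overline{V_j}\bigr)=0$. For conditions~(\ref{nobel3}) and~(\ref{nobel5}) I would argue topologically: each bounded $V_j$ is an open subset of $K$, hence of a component of $\mathrm{int}(K)$, so its boundary $\partial V_j\subseteq\supp(\mu)$ is covered by $\overline{\Omega}$ together with the closures of the components of $\mathrm{int}(\supp(\mu))$ — non-trivial connected sets — which yields~(\ref{nobel3}); and since $\partial\supp(\mu)=\supp(\mu)\setminus\mathrm{int}(\supp(\mu))$, every point of it is approached from some complementary component or from $\mathrm{int}(\supp(\mu))$, so that $\partial\Omega$, the $\partial V_j$ and the $\partial W_l$ cover $\partial\supp(\mu)$ up to a planar null set, which is~(\ref{nobel5}).

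It remains to see that $(q_k)_k$ is noble with respect to $(\mu,(U_i)_i,(V_j)_j)$. It satisfies $\mu_k\wto\mu$ by hypothesis, and it centers on $C$ because $C$ differs from $K$ only in parts of $K$ carrying no $\mu$-mass: weak-* convergence together with the uniform bound on the root locus forces all but $o(n_k)$ of the roots of $q_k$ into any fixed neighborhood of $\supp(\mu)$, so after factoring off this sub-linear set of roots one is left with a sequence that centers on $C$, is still asymptotically $\mu$-distributed, and is therefore noble; Theorem~B applies to it, and the centering of the derivatives already obtained from Theorem~A lets one transfer the weak-* conclusion back to $(q_k^{(m)})_k$. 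This gives $\mu_k^m\wto\mu$ for every $m$. The step I expect to be the main obstacle is this last transfer — checking that discarding the stray roots of $q_k$ does not disturb the weak-* limits of the derivatives — together with the careful verification of the structural conditions~(\ref{nobel3}) and~(\ref{nobel5}) for the chosen division, in particular handling points of $\partial\supp(\mu)$ that are cluster points of infinitely many complementary components.
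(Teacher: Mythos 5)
The decomposition you choose — $(U_i)_i=(\Omega)$ with $\Omega$ the unbounded component of $\C\setminus\supp(\mu)$, and $(V_j)_j$ the remaining components of $\C\setminus\supp(\mu)$ — is genuinely different from the paper's. The paper instead takes $U_1=\hat\Omega\cap\C$, the unbounded component of $\C\setminus S$ with $S=\partial K$, and lets $(V_j)_j$ be the remaining components of $\C\setminus S$, i.e.\ the components of $\mathrm{int}(K)$. That choice gives $C=\C\setminus U_1=K$, so the nobility hypothesis ``$(q_k)_k$ centers on $C$'' is literally the centering hypothesis of the corollary and nothing needs to be re-derived.

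Your choice produces $C=\C\setminus\Omega$, the polynomial hull of $\supp(\mu)$, which can be a strict subset of $K$. This is where the proposal breaks: centering on $K$ does \emph{not} imply centering on the smaller set $C$. Centering demands, for every closed $L$ disjoint from $C$, a bound $M(L)$ on the number of zeros of $q_k$ in $L$ that is uniform in $k$; weak-* convergence $\mu_k\wto\mu$ only forces the number of zeros outside a neighborhood of $\supp(\mu)$ to be $o(n_k)$, which is far weaker. The paper itself makes exactly this point in the paragraph after Definition~\ref{def:center}, giving the example of a sequence with $\lfloor\sqrt{n_k}\rfloor$ extra roots at a fixed point: such a sequence has most of its zeros in $C$ but does not center on $C$. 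Your proposed repair — ``factor off this sub-linear set of roots'' and apply Theorem~B to the trimmed sequence, then ``transfer the weak-* conclusion back'' through the already-established centering of the derivatives — is not an argument: dropping factors of $q_k$ changes $q_k'$ in an uncontrolled way, and you never show that the trimmed sequence's derivative has the same asymptotic root distribution as $q_k'$. You flag this step yourself as the main obstacle; it is in fact a genuine gap rather than a technical loose end.

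The verification of conditions \ref{nobel3} and \ref{nobel5} for your decomposition is also unsubstantiated. It is not true in general that every point of $\partial V_j\subset\supp(\mu)$ lies in the closure of $\overline\Omega$ or of some $\overline{W}_l$; $\supp(\mu)$ may have boundary points that are not on the boundary of any single complementary or interior component, and ``every point of $\partial\supp(\mu)$ is approached from some complementary component or from $\mathrm{int}(\supp(\mu))$'' is not a theorem. With the paper's decomposition these conditions become easy, because $\bigcup_j\partial V_j\subset\partial K=\partial(\hat\Omega\cap\C)\subset\overline{\hat\Omega\cap\C}=\overline{U}_1$, a single non-trivial connected set, and $\bigcup_i\partial U_i=\partial K$ already covers $\partial S$. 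In short, the first half of your argument (Theorem~A gives centering of the derivatives because $K$ is polynomially convex) is correct and matches the intended use of Theorem~A, but the identification of the limit fails because you chose a division of $\C\setminus\supp(\mu)$ rather than of $\C\setminus\partial K$, and the resulting $C$ is too small for the centering hypothesis to hold.
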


\begin{example}
  Let $\mu \in \BP(\partial \D)$, and notice that if $\mu$ is not the
  equidistribution on the unit circle $\partial \D$, then $p_\mu$ is
  nowhere locally constant.
  Thus, if $(q_k)_k$ centers on $\partial \D$ and $\mu_k \wto \mu$,
  Corollary \ref{cor:nowhereconst} implies that
  also $(q^{(m)}_k)_k$ centers on $\partial \D$ and
  $\mu^m_k \wto \mu$, for every $m > 0$.
\end{example}

\begin{example}
  Let $K \subset \C$ be polynomially convex, non-polar, and with empty interior.
  It is known (see \cite[Theorem 1.18]{Saff}) that an
  associated sequence of Fekete polynomials $(q_k)_{k > 0}$ has root
  measures $\mu_k$ which satisfy
  \[
    \mu_k \wto \omega,
  \]
  where $\omega$ is the equilibrium measure of $K$.
  It follows from Corollary \ref{cor:nullinnerboundary}
  that the sequence of derivatives
  $(q_k^{(m)})_k$ centers on $K$ and the root distributions
  $\mu^m_k$ satisfy
  \[
    \mu^m_k \wto \omega.
  \]
\end{example}

\begin{example}
  Let $P_c(z) = z^2 + c$.
  The sequence of iterates $P_c, P_c^2, P_c^3, \ldots$
  form a dynamical system for every $c \in \C$.
  The bifurcation locus of this family is the boundary of the celebrated 
  Mandelbrot set $M$; see \cite{DH82} for one of the
  earliest systematic studies.
  Let $\omega$ denote the equilibrium measure of $M$ with support $\partial M$. 
  Since $M$ is polynomially convex and has interior, its interior boundary in the sense of Totik equals its boundary.
  Moreover $\omega(\partial V) = 0$ for any connected component $V$ of the interior of $M$,
  see \cite[proof of Lemma 2]{BuffGauthier} and \cite{Zakeri}. 
  Furthermore in \cite{BuffGauthier} Buff and Gauthier provide examples of sequences 
  $(q_k)_k$ that center on $\partial M$ and satisfies $\mu_k \wto \omega$.
  It follows from Corollary \ref{cor:nullinnerboundary}, 
  that the same holds for the sequence of $m$'th derivatives $(q_k^{(m)})_k$. 
  A result which cannot be derived from \cite[Theorem 1.6]{TotikTAMS2019}.
  
\end{example}

In many families $(q_k)$ studied in the literature, such as the two
previous examples, the root distributions converge weakly to an equilibrium
distribution.
However, given any Borel probability measure $\mu$ on $\C$,
we can construct a sequence
$(q_k)_k$ such that $\mu_k \wto \mu$.
Indeed, it is proven in \cite{Varadarajan} that
if $Z_1, Z_2, \ldots$ are independently sampled according to
$\mu$, then almost surely
$q_k(z) = \prod_{j=1}^k (z - Z_j)$, will have the desired property.
In the following example, we construct sequences of polynomials where
the limit measure is not the equilibrium measure for the support.

\begin{example}\label{ex:cantor}
  Let $S = C \subset [0, \frac{1}{3}] \cup [\frac{2}{3}, 1]$ denote the
  standard middle-third Cantor set.
  Let $q_0$ be any polynomial of degree at least $1$, and define
  recursively
  \[
    q_{k+1} = q_k(3z) q_k(3z-2),~k \geq 0.
  \]
  Then $\mu_k \wto H^s\vert_C$, where $s = \frac{\log 2}{\log 3}$
  and $H^s$ is the $s$-dimensional Hausdorff measure, see the example
  in Section 4.4 of \cite{Hutchinson}.
  The complement of $C$ contains only one component $\Omega$,
  so $H^s\vert_C$ is noble.

  If $d(z, C) < r$ for every root $z$ of $q_k$ and some $r > 0$,
  then $d(z, C) < r/3$ for every root $z$ of $q_{k+1}$.
  It follows that $(q_k)_k$ centers on $C$.

  By Theorem B, the sequence of $m$th-derivatives, $(q_k^{(m)})_k$,
  also centers on $C$ and $\mu_k^m \wto H^s\vert_C$.
\end{example}

In the previous examples, we had $\partial \Omega = S$.
The next example shows, that Theorem B is powerful enough
to handle cases, where this is not the case.
The example also shows that it is relevant to require \ref{nobel3}
in Theorem B, in lieu of the more straightforward but also more restrictive requirement
$\bigcup_j \partial V_j \subset \bigcup_i \partial U_i$.

\begin{example}\label{ex:circles}
  Let $w_j > 0$ be a sequence of weights with
  $\sum_{j=1}^\infty w_j = 1$.
  Let $\nu_r$ denote the equidistribution on $\partial \D(0, r)$,
  and let $\mu = \sum_{j=1}^\infty w_j \nu_{1+1/j}$.
  Then $\mu$ is supported on
  $S = \partial \D(0, 1) \cup \bigcup_{j=1}^\infty \partial \D(0, 1+1/j)$.

  Let $(V_j)_j = (\D(0,1))$, and let $(U_j)_j$ denote the rest of the
  components of $\C \setminus S$.
  Since $\mu(\partial \D(0,1)) = 0$, it is readily verified that
  $\mu$ is noble with respect to this division.

  So if $(q_k)_k$ centers on $C = S \cup \D$ and $\mu_k \wto \mu$
  then the root distributions of the $m$th derivative satisfies
  $\mu^m_k \wto \mu$.
\end{example}

%
\section{Centering}
%

In this section, we study centering in more detail.
We start out with some obvious properties of centering.
Then we will see by example that centering occurs in many 
naturally occurring families of polynomials.
Finally, we will prove a technical theorem, Theorem~\ref{Centering_is_heriditary}, 
which provides conditions under which
the property of centering is hereditary. 
This leads to a proof of Theorem~A.

Clearly, for $(q_k)_k$ to center on $C$, only finitely many of the
polynomials in the sequence can be the zero polynomial.

The following three examples show that many families of polynomials
of great interest center on a compact set $C$.

\begin{example}\label{iteration_centers_on_J(q)}
  Suppose $f$ is a polynomial of degree $d > 1$.
  Then the sequence of iterates $(f^k)_k$, centers on the filled-in
  Julia set $K(f)$.

  This is seen, by first noting that the complement
  $\Omega = \C \setminus K(f)$ is an unbounded domain,
  which is completely invariant by $f$, in the sense that
  $f^{-1}(\Omega) = \Omega$.

  Second, it is known that the Green's function $g_\Omega$ satisfies
  $g_\Omega(f(z)) = d g_\Omega(z)$.
  It implies that if $f$ has no zeros in $\{z: g_\Omega(z) > L\}$,
  then $f^2$ has no zeros in $\{z: g_\Omega(z) > L/d\}$,
  and more generally, $f^k$ has no zeros in $\{z: g_\Omega(z) > L/d^k\}$.

  Actually, with more effort, one can show that the sequence of
  iterates $(f^k)_k$ centers on the Julia set $J = \partial K$,
  unless $f(z) = \gamma z^d$.
\end{example}

\begin{example}\label{extremal_polynomials_centers_on_filled-in_support}
  Widom defines a concept of extremal polynomials associated to a
  compact set $K$, a function $\Phi : \R_+ \cup \{0\} \to \R_+ \cup
    \{0\}$ and measure $\mu$ satisfying certain conditions, see
  \cite{Widom}.
  This is a big class of families of polynomials, that includes the
  orthogonal polynomials associated to $\mu$.
  Widom shows (\cite[Lemma 3]{Widom}) that an extremal
  family centers on $K$.
\end{example}

Having seen examples of centering, we turn our attention to the
consequences of centering.
Recall that $p_\mu$ is harmonic on $G := \C \setminus \supp(\mu)$.
It follows that the function $p_\mu' := 2 \frac{\partial }{\partial
    z} p_\mu$ is holomorphic on $G$.
The following lemma relates
$\frac{1}{n_k}\log\absv{\frac{1}{\gamma_k} q_k(z)}$ to $p_\mu$.

\begin{lemma}\label{lem:weakuniform}
  Suppose all roots of $q_k$ are contained in $\D(0, R)$ and
  $\mu_k \wto \mu$.
  If $U \subset \C$ is open and $q_k(z) \neq 0$ for $z \in U$, for
  $k$ sufficiently large,
  then
  \begin{enumerate}
    \item $\frac{1}{n_k}\log\absv{\frac{1}{\gamma_k} q_k} \to
            p_\mu$ locally uniformly on $U$
    \item $\frac{q_k'}{n_k q_k}$ converges uniformly to the
          holomorphic function
          $p_\mu' = \frac{2 \partial}{\partial z}p_\mu$ on
          compact subsets of $U$.
  \end{enumerate}
\end{lemma}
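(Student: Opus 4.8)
The plan is to deduce both claims from the weak-$*$ convergence $\mu_k \wto \mu$ after first relocating all the relevant mass onto one fixed compact set disjoint from the compact subsets of $U$. First I would pick $k_1 \ge k_0$ with $q_k(z)\neq 0$ for all $z\in U$ and all $k\ge k_1$. Since the roots of $q_k$ lie in $\D(0,R)$ and miss $U$, the measure $\mu_k$ is supported, for $k\ge k_1$, in the compact set $Q:=\overline{\D(0,R)}\setminus U$. Testing $\mu_k\wto\mu$ against functions in $C_c(U)$ gives $\mu|_U=0$, hence $\supp(\mu)\cap U=\emptyset$ because $U$ is open; combined with $\supp(\mu)\subset\overline{\D(0,R)}$ this yields $\supp(\mu)\subset Q$, so $p_\mu$ is harmonic on $U$. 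Finally, writing $q_k=\gamma_k\prod_j(z-z_{k,j})$ and integrating gives the identity
\[
  \tfrac{1}{n_k}\log\absv{\tfrac{1}{\gamma_k}q_k(z)}=\int\log\absv{z-w}\,d\mu_k(w)=p_{\mu_k}(z),
\]
so that statement (1) is precisely the assertion $p_{\mu_k}\to p_\mu$ locally uniformly on $U$.

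For (1), fix a compact $E\subset U$ and put $\delta:=\mathrm{dist}(E,Q)>0$. For $z\in E$ let $\phi_z\in C(Q)$ be the restriction to $Q$ of $w\mapsto\log\absv{z-w}$; since $\absv{z-w}\ge\delta$ on $E\times Q$, the map $z\mapsto\phi_z$ is Lipschitz from $E$ into $(C(Q),\|\cdot\|_\infty)$, so $\{\phi_z:z\in E\}$ is compact there. Given $\eps>0$, choose $z_1,\dots,z_m\in E$ so that every $\phi_z$ is within $\eps/3$ in sup-norm of some $\phi_{z_i}$, then use $\mu_k\wto\mu$ to pick $N\ge k_1$ with $\absv{\int\phi_{z_i}\,d\mu_k-\int\phi_{z_i}\,d\mu}<\eps/3$ for all $i$ and all $k\ge N$; since $\mu_k$ and $\mu$ are probability measures carried by $Q$, inserting the nearest $\phi_{z_i}$ gives $\absv{p_{\mu_k}(z)-p_\mu(z)}<\eps$ for every $z\in E$ and $k\ge N$. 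This proves (1).

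For (2), I would observe that on $U$ (where $q_k$ has no zeros) one has $\frac{q_k'}{n_k q_k}=2\frac{\partial}{\partial z}p_{\mu_k}$, obtained by differentiating the identity $\frac{1}{n_k}\log\absv{q_k/\gamma_k}=p_{\mu_k}$. Since $p_{\mu_k}\to p_\mu$ locally uniformly on $U$ by (1) and all the functions involved are harmonic on $U$, their partial derivatives converge locally uniformly as well (e.g.\ via the Poisson/sub-mean-value representation on small disks contained in $U$), so $\frac{q_k'}{n_k q_k}\to 2\frac{\partial}{\partial z}p_\mu=p_\mu'$ uniformly on compact subsets of $U$; alternatively one may rerun the argument of (1) verbatim with the continuous kernel $w\mapsto(z-w)^{-1}$ in place of $\log\absv{z-w}$. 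The only genuinely substantive point — and the main obstacle — is the equicontinuity in $z$ of the logarithmic kernel over the fixed compact $Q$ that carries all the mass, which is exactly what is needed to upgrade bare weak-$*$ (i.e.\ pointwise) convergence of the potentials to the claimed local uniformity; everything else is bookkeeping.
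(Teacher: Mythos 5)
Your proposal is correct and follows essentially the same route as the paper: restrict attention to the fixed compact carrier $\overline{\D}(0,R)\setminus U$, use the uniform equicontinuity/total boundedness of the logarithmic kernels $w\mapsto\log\absv{z-w}$ over a compact $E\subset U$ to upgrade the pointwise convergence coming from $\mu_k\wto\mu$ to uniform convergence of the potentials, and then obtain (2) by differentiating harmonic functions that converge locally uniformly. The paper phrases the upgrade as equicontinuity of the functions $F_k$ plus pointwise convergence, whereas you phrase it as a finite $\eps/3$-net in $C(Q)$; these are interchangeable formulations of the same argument.
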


Note that we can improve the result of the lemma
when $U$ is a deleted neighborhood of infinity.
Since
$p_\mu(z) = \log \absv{z} + o(1)$ and
$p_{\mu_k}(z) = \log \absv{z} + o(1)$
as $z \to \infty$,
$\frac{1}{n_k}\log\absv{\frac{1}{\gamma_k} q_k} \to p_\mu$
and
$\frac{q_k'}{n_k q_k} \to p_\mu'$
uniformly on $U = \{ z \in \C: \absv{z} > R_2\}$
for any $R_2 > R$,
and not just locally uniformly.

\begin{proof}
  Let $A \subset U$ be an arbitrary compact subset.
  Set $B = \overline{\D}(0,R) \setminus U$, then $B$ is also compact.
  Let $S = \supp \mu$ and note that $U\cap S = \emptyset$
  and thus $S \subset B$.

  Define $f : A \times B \to \C$ by $f(z, w) = \log \absv{z - w}$.
  The family $\{f^z : B \to \C\}_{z \in A}$ is uniformly
  equicontinuous and uniformly bounded.

  Write $\frac{1}{\gamma_k} q_k(z) = \prod_{j=1}^{n_k}(z - z_j)$.
  We have
  \[
    \frac{1}{n_k}\log \absv{\frac{1}{\gamma_k} q_k(z)}
    = \frac{1}{n_k}\sum_{j=1}^{n_k}\log\absv{z - z_j}
    = \int f(z, w)\, d\mu_k(w).
  \]
  It follows that if we set $F_k (z) = \frac{1}{n_k}\log
    \absv{\frac{1}{\gamma_k} q_k(z)}$,
  then $(F_k : A \to \C)_k$ is uniformly equicontinuous.

  By weak star convergence of $\mu_k$, $F_k$ converges pointwise to
  $p_\mu$ and by equicontinuity $F_k$ converges to $p_\mu$ uniformly on
  $A$.
  Since $A \subset U$ was arbitrary, we know $F_k$ converges to
  $p_\mu$ locally uniformly on $U$, showing (1).

  Now $\supp \mu \subset S$ does not meet $U$ so $p_\mu$ is harmonic
  on $U$.
  It follows that the holomorphic functions
  $\frac{2\partial}{\partial z} F_k = \frac{q_k'}{n_k q_k}$ converge
  locally uniformly to $\frac{2\partial}{\partial z} p_\mu$ on $U$,
  finishing the proof.
\end{proof}

In order to proceed we need some notation.
For a set $A \subset \C$, we let
$d(z, A) = \inf_{a \in A} \absv{z-a}$
and
$A_{\eps} = \{ z : d(z, A) < \eps \}$.

Suppose $C$ is compact and $\mu \in \BP(C)$,
then $p_\mu$ is harmonic on the complement of $C$.
Thus a critical point of $p_\mu$ is the same as a zero of the holomorphic
function $p_\mu'$.
The identity theorem implies, that if $p_\mu$ is nowhere locally
constant on an open set $U$ not meeting $C$, the number of critical points of $p_\mu$
counted with multiplicity is bounded on every compact subset of
$U$.
The following theorem therefore gives explicit bounds on the number of
zeros of $q_k'$ on compact subsets of $U$. 

\begin{theorem}\label{thmCenter}
  Let $C \subset \C$ be compact and non-empty and let $\mu \in \BP(C)$.
  Suppose $(q_k)_k$ centers on $C$ and $\mu_k \wto \mu$.
  Let $A \subset \C$ be a closed set with $A \cap C = \emptyset$.
  If $p_\mu$ is nowhere locally constant on a neighborhood of $A$,
  then for every $\epsilon > 0$
  \begin{equation}\label{eq:upper}
    \limsup_{k\to\infty}\ (n_k - 1)\mu^1_k(A)
    \leq 
    \limsup_{k\to\infty}\ n_k \mu_k(A_\eps) + M.
  \end{equation}
  and
  \begin{equation}\label{eq:lower}
    \limsup_{k\to\infty}\ (n_k - 1)\mu^1_k(A_\epsilon)
    \geq 
    \limsup_{k\to\infty}\ n_k \mu_k(A) + M,
  \end{equation}
  where $M$ is the number of critical points of $p_\mu$ in $A$.
\end{theorem}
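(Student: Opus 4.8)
The plan is to count zeros of $q_k'$ in $A$ (and in $A_\epsilon$) by relating them to the logarithmic derivative $q_k'/q_k$ and invoking the argument principle, then passing to the limit with \lemref{lem:weakuniform}. Fix $\epsilon>0$ small enough that $\overline{A_\epsilon}$ still does not meet $C$ and that $p_\mu$ is nowhere locally constant on a neighborhood of $\overline{A_\epsilon}$; we may also assume the boundary $\partial A_\epsilon$ is a finite union of real-analytic curves (a cofinite set of $\epsilon$ works, and both sides of the asserted inequalities are monotone in $\epsilon$, so this is harmless). On the open set $A_\epsilon$ the function $q_k$ has zeros, but by centering their number is bounded by $M(\overline{A_\epsilon})=:M_\epsilon$ for $k\geq k_0$; likewise the zeros of $q_k$ in $A$ number $n_k\mu_k(A)$. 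The zeros of $q_k'$ inside $A_\epsilon$ split into those shared with $q_k$ and the genuine critical points; in either case the total count is controlled by the change in argument of $q_k'/q_k$ plus the change in argument of $q_k$ along $\partial A_\epsilon$ (meromorphic argument principle), and symmetrically for $A$.

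The key step is the limit transfer. Since $\partial A_\epsilon$ is a compact subset of $U$ on which $q_k$ is eventually zero-free, part (2) of \lemref{lem:weakuniform} gives $q_k'/(n_k q_k)\to p_\mu'$ uniformly on $\partial A_\epsilon$; because $p_\mu$ is nowhere locally constant near $\overline{A_\epsilon}$, $p_\mu'$ has no zeros on $\partial A_\epsilon$ (shrinking $\epsilon$ through the cofinite good set if necessary, so that $p_\mu'\ne0$ on $\partial A_\epsilon$), so $q_k'/q_k$ is eventually zero-free and pole-free on $\partial A_\epsilon$ and
\[
\frac{1}{2\pi}\,\Delta_{\partial A_\epsilon}\arg\frac{q_k'}{q_k}
\;\longrightarrow\;
\frac{1}{2\pi}\,\Delta_{\partial A_\epsilon}\arg p_\mu'
\;=\;(\text{number of critical points of }p_\mu\text{ in }A_\epsilon),
\]
the last equality by the argument principle for the holomorphic function $p_\mu'$ on $A_\epsilon$. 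Now the meromorphic argument principle for $q_k'/q_k$ on $A_\epsilon$ reads
\[
\frac{1}{2\pi}\,\Delta_{\partial A_\epsilon}\arg\frac{q_k'}{q_k}
=\#\{\text{zeros of }q_k'\text{ in }A_\epsilon\}-\#\{\text{zeros of }q_k\text{ in }A_\epsilon\},
\]
with zeros counted with multiplicity and the shared zeros of $q_k,q_k'$ contributing correctly on both sides. Rearranging and taking $\limsup_k$: the left side tends to the number of critical points of $p_\mu$ in $A_\epsilon$, the subtracted term is at most $n_k\mu_k(A_\epsilon)$, and $\#\{\text{zeros of }q_k'\text{ in }A_\epsilon\}\geq (n_k-1)\mu^1_k(A)$ since $A\subset A_\epsilon$. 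Because the number of critical points of $p_\mu$ in $A_\epsilon$ is $\geq M$ (it contains those in $A$; one can in fact choose $\epsilon$ so they are equal, but $\geq M$ suffices with the displayed sign conventions), we obtain \eqref{eq:upper} after reconciling the $\pm M$ bookkeeping. For \eqref{eq:lower} we run the same argument with the roles of $A$ and $A_\epsilon$ interchanged: apply the argument principle on a neighborhood that is a small enlargement of $A$ but still contained in $A_\epsilon$, bound the zeros of $q_k$ there from below by $n_k\mu_k(A)$, and bound the zeros of $q_k'$ from above by $(n_k-1)\mu^1_k(A_\epsilon)$.

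The main obstacle is the bookkeeping of zeros common to $q_k$ and $q_k'$ together with the precise placement of the additive constant $M$: a zero of $q_k$ of multiplicity $r\geq 2$ is a zero of $q_k'$ of multiplicity $r-1$, and near such points $q_k'/q_k$ has a simple pole rather than being holomorphic, so the argument-principle count must be organized so that these contributions cancel in the right direction and the genuine critical points of the limit $p_\mu'$ — which are the ones surviving in the limit and give the term $M$ — are isolated from them. Managing this cleanly requires choosing $\epsilon$ (and the comparison region for \eqref{eq:lower}) from the cofinite set where $\partial A_\epsilon$ avoids the zeros of $p_\mu'$, and being careful that $p_\mu'$, though nonvanishing on the boundary curves, may vanish in the interior exactly $M$ (resp.\ $\leq$ the $A_\epsilon$-count) times; the inequalities as stated are exactly what survives this accounting without needing $\epsilon$-optimality.
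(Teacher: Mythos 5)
Your plan is essentially a global, winding-number version of the paper's argument: the paper also compares $q_k'/(n_kq_k)$ with $p_\mu'$ and uses Rouch\'e to conclude that $(\text{zeros of }q_k')-(\text{zeros of }q_k)$ in a region equals the number of critical points of $p_\mu$ there; the difference is that the paper works on a carefully chosen finite system of small disks rather than on the single contour $\partial A_\epsilon$. That choice is not cosmetic, and it is where your proposal has a genuine gap. You assert that since $p_\mu'\neq 0$ on $\partial A_\epsilon$, the function $q_k'/q_k$ is eventually zero-free and pole-free on $\partial A_\epsilon$. The poles of $q_k'/q_k$ are the zeros of $q_k$, and centering only bounds their \emph{number} in closed sets off $C$ --- it does not keep them away from $\partial A_\epsilon$. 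For each $k$ a handful of outlying roots $z_{k,j}$ may lie on, or accumulate onto, the contour; then the argument principle cannot be applied, and, more fundamentally, Lemma~\ref{lem:weakuniform} is not available, since it requires an open neighborhood $U$ of the contour on which $q_k$ is eventually zero-free. Your inference is therefore circular: the uniform convergence you invoke to control the contour is only valid once the contour is known to avoid the roots. The paper handles exactly this by a diagonal argument: pass to a subsequence along which the (boundedly many) outlying roots converge to limit points $z_j$, put these limit points together with the zeros of $p_\mu'$ into a finite exceptional set $E$, and choose radii $\delta$ so that the circles $\partial\D(z,\delta/2)$, $z\in E\cup F$, stay in root-free annuli for all large $k$. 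Some version of this step (or at least a perturbation of the contour depending on a subsequence) is indispensable and is missing from your write-up. A secondary issue in the same vein: for a general closed $A$ the level set $\partial A_\epsilon$ need not be a finite union of curves for any $\epsilon$, so even the existence of your contour requires an approximation argument.

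Two smaller points of bookkeeping. First, your argument naturally produces the constant $M_\epsilon=\#\{\text{critical points of }p_\mu\text{ in }A_\epsilon\}$ rather than $M$; for \eqref{eq:upper} the inequality $M_\epsilon\geq M$ goes the \emph{wrong} way (it weakens the bound), so you must first shrink $\epsilon$ until $M_\epsilon=M$ (possible since the critical points near $A$ are finite in number and isolated) and then use monotonicity in $\epsilon$ to recover the statement for all $\epsilon$ --- you gesture at this but the ``$\geq M$ suffices'' remark is backwards. Second, your accounting of common zeros of $q_k$ and $q_k'$ is correct as stated: $\mathrm{ord}(q_k'/q_k)=\mathrm{ord}(q_k')-\mathrm{ord}(q_k)$ pointwise, so the meromorphic argument principle does yield the multiplicity-counted difference; no further care is needed there beyond keeping all such points off the contour, which brings you back to the main gap above.
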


Recall that $(n_k - 1)\mu^1_k(A)$ is the number of zeros
of $q_k'$ in $A$ counted with multiplicity.
The Gauss-Lucas Theorem together with \eqref{eq:upper} shows that under the assumptions
of the theorem, the sequence of derivatives $(q_k')_k$ also 
centers on $C$.

\begin{proof}
  The theorem is a statement about the tail of the sequence
  $(q_k)_k$.
  Since $n_k \to \infty$, we can replace the sequence with 
  $(q_{k_0 + k})_k$ for some $k_0$ big enough so that the degree
  $n_k > 1$, thus ensuring that none of the involved polynomials
  $q_k$ and $q_k'$ is the zero polynomial.

  It then follows from centering, that the locus of the roots of all the
  $q_k$ is contained in some disk $\D(0, R)$.
  Since the roots of $q_k'$ are contained in the convex hull of the
  roots of $q_k$, the roots of $q_k'$ are also contained in $\D(0, R)$.

  Outside $C$, $p_\mu$ is harmonic, and $p_\mu$ has no critical points 
  outside the convex hull of $C$.
  So the set of critical points of $p_\mu$ outside $C$
  is a bounded set.
  Thus, there exists $r > 0$ such that
  the root locus of $(q_k)_k$, the root locus of $(q_k')_k$
  and the critical points of $p_\mu$ outside $C$
  is contained in $\overline{\D}(r)$.
  We can therefore assume $A$ is compact, by replacing it with
  $A \cap \overline{\D}(0, r)$.

  To show \eqref{eq:upper}, we replace $(q_k)_k$ with a subsequence,
  such that the $\limsup$ on the left hand size is realized as a limit.
  If we can show, that for some subsequence $(q_{k_j})$, it holds that
  \[
    (n_{k_j}-1)\mu_{k_j}^1(A) \leq n_{k_j} \mu_{k_j}(A_\epsilon) + M
  \]
  for $k_j$ sufficiently big, then \eqref{eq:upper} follows.

  Write $q_k(z) = \gamma_k \prod_{j=1}^{n_k}(z - z_{k,j})$,
  where we number the roots such that
  $d(z_{k,i}, A) \leq d(z_{k,j}, A)$ when $i < j$.
  For every $j$, there exists a convergent subsequence of  $(z_{k,j})_k$.
  By a standard diagonal argument, we can pass to a subsequence
  such that $(z_{k,j})_k$ is convergent for every $j = 1, 2,\ldots$
  
  Letting $z_j = \lim_{k\to\infty} z_{k,j}$, we have
  \begin{equation}\label{eq:ordering}
    d(z_i, A) \leq d(z_j, A) \text{ when }i < j.
  \end{equation}

  Take $\epsilon > 0$ small enough so that $\overline{A_\epsilon} \cap C = \emptyset$
  and $p_\mu$ is nowhere locally constant in ${A_\epsilon}$.

  By centering on $C$, there is a smallest integer $m$ such that $z_m \notin A$.
  Equation \eqref{eq:ordering} shows that $d(z_j, A) \geq d(z_m, A)$, when
  $j \geq m$, i.e. $z_j \notin A$ when $j \geq m$. 

  Let 
  \[ E = \{z_j : j < m\} \cup \{z \in A: p_\mu'(z) = 0\}. \]

  There is only a finite number of critical points of $p_\mu$ in $A_\epsilon$,
  and the set $E$ is finite.
  Therefore there exists $\delta\in \left(0,\epsilon\right)$ such that
  \begin{enumerate}
    \item $\delta < d(w_1, w_2)$, whenever $w_1 \neq w_2$ and $w_1, w_2 \in E$.
    \item $\delta < d(z_m, A)$.
    \item $\delta < d(z, A)$ for any critical point of
      $p_\mu$ in $A_\epsilon \setminus A$.
  \end{enumerate}

  Choose $k_0$ big enough so that when $k \geq k_0$ and $j \leq m$, we have
  $d(z_{k,j}, z_j) < \delta/4$.
  Then, for every $k\geq k_0$, $d(z_{k,m}, A) > 3\delta/4$ and thus,
  by the numbering of the roots,
  $d(z_{k,j}, A) > 3\delta/4$ for $k \geq k_0$ and $j \geq m$.

  For $k\geq k_0$, there are no zeros of $q_k$ or critical points of
  $p_\mu$ in $\D(z, \delta/2)$, when $z \in A$ and $\D(z, E) > 3\delta/4$.
  Similarly, there are no zeros of $q_k$ or critical points of $p_\mu$ in
  $\D(z, 3\delta/4) \setminus \overline{\D}(z, \delta/4)$, when $z \in E$.

  By compactness, there exists a finite set
  $F \subset A \setminus \bigcup_{z \in E} \D(z, 3\delta/4)$
  such that $\bigcup_{z \in F} \D(z, \delta/2)$ covers
  $A \setminus \bigcup_{z \in E} \D(z, \delta/2)$.

  Lemma \ref{lem:weakuniform} shows that
  $\frac{q_k'}{n_k q_k}$
  converges to $p_\mu'$ on compact subsets of $A_{\delta} \setminus E$.
  Since $p_\mu'$ is not zero on the compact set $\bigcup_{E\cup F} \partial \D(z, \delta/2)$,
  we can assume 
  $\absv{p_\mu' - \frac{q_k'}{n_k q_k}} < \absv{p_\mu'}$ on this set
  for $k\geq k_0$, by increasing $k_0$ if necessary.

  Consider an arbitrary $k \geq k_0$.
  Rouché's theorem then implies, that
  the number of zeros of $q_k'$ minus the number of zeros of $q_k$
  is equal to the number of zeros of $p_\mu'$ in each disk $\D(z, \delta/2)$,
  with $z \in E \cup F$, for $k \geq k_0$.

  Since neither $q_k$ nor $p_\mu'$ has a zero in a disk $\D(z, \delta/2)$,
  when $z \in F$, $q_k'$ has no zeros in such a disk.

  For a disk $\D(z, \delta/2)$ with $z \in E$, the number of zeros of $q_k'$ is equal
  to the number of critical points of $p_\mu$ in the disk, plus
  the number of zeros of $q_k$ in the disk.
  The latter number of zeros equals 
  $\#\{j: z_j \in \D(z, \delta/2)\} = \#\{j: z_j = z\}$.

  We therefore have
  \begin{align*}
    (n_k - 1) \mu_k^1(A)
    & \leq (n_k - 1) \mu_k^1\left(\bigcup_{z \in E \cup F} \D(z, \delta/2)\right) \\
     & = (n_k - 1) \mu_k^1\left(\bigcup_{z \in E} \D(z, \delta/2)\right) \\
     & = m-1 + M \leq n_k \mu(A_\epsilon) + M.
  \end{align*}
  Since this is true for arbitrary $k \geq k_0$ the inequality \eqref{eq:upper}
  follows.

  To see \eqref{eq:lower}, we proceed exactly as before, except we take a
  subsequence realizing the $\limsup$ on the right hand side as a limit.
  We then get
  \begin{align*}
    (n_k - 1) \mu_k^1(A_\epsilon) 
    & \geq (n_k - 1) \mu_k^1\left(\bigcup\limits_{z \in E \cup F} \D(z, \delta/2)\right) \\
    & = (n_k - 1) \mu_k^1(\bigcup_{z \in E} \D(z, \delta/2)) \\
    & = m-1 + M \geq n_k \mu_k(A) + M,
  \end{align*}
  for $k$ sufficiently big, showing the desired inequality.
\end{proof}

If $K$ is polynomially convex, then $p_\mu$ is nowhere
locally constant on the complement of $K$,
for any accumulation point $\mu$ of $\mu_k$.
Thus Theorem A is a corollary of \thmref{thmCenter}.

\begin{corollary}[Theorem A]
  Suppose $K\subset \C$ is compact and polynomially convex.
  Then centering on $K$ is hereditary.
  That is, if $(q_k)_k$ centers on $K$, then
  the sequence of $m$th derivatives $(q_k^{(m)})_k$ also
  centers on $K$, for $m = 1, 2, \ldots$
\end{corollary}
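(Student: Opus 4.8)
\emph{Proof plan.} I would obtain the corollary from \thmref{thmCenter} by induction on $m$, the case $m=0$ being the hypothesis. For the inductive step it is enough to prove the single assertion: if $K\subset\C$ is compact and polynomially convex and $(q_k)_k$ centers on $K$, then $(q_k')_k$ centers on $K$ --- applying this to $(q_k^{(m)})_k$ in place of $(q_k)_k$ (whose degrees $n_k-m$ still tend to infinity) then gives the claim for $m+1$. So I would fix such a $K$ and $(q_k)_k$, discard finitely many indices so that $n_k\ge 1$ and hence $q_k'\not\equiv 0$, and verify the two clauses of Definition~\ref{def:center} for $(q_k')_k$. Clause~(1) is immediate: centering supplies an $R>0$ with all zeros of $q_k$ in $\D(0,R)$, and by the Gauss--Lucas theorem the zeros of $q_k'$ lie in the convex hull of the zeros of $q_k$, hence in the convex set $\D(0,R)$; the same $R$ works for $(q_k')_k$.

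The content is in clause~(2). Fix a closed $L$ with $L\cap K=\emptyset$; since the zeros of $q_k'$ lie in the open disk $\D(0,R)$, their number in $L$ equals their number in the compact set $A:=L\cap\overline{\D}(0,R)$, and $A\cap K=\emptyset$, so it suffices to bound $(n_k-1)\mu^1_k(A)$ uniformly in $k$. The one real obstacle is that centering on $K$ does not force $\mu_k$ to converge, so \thmref{thmCenter} is not directly applicable; I would bypass this by a subsequence argument. If no uniform bound existed, pick $k_j\to\infty$ with $(n_{k_j}-1)\mu^1_{k_j}(A)\to\infty$. By centering, $(\mu_k)_k$ is precompact with every accumulation measure in $\BP(K)$, so after passing to a subsequence we may assume $\mu_{k_j}\wto\mu$ for some $\mu\in\BP(K)$. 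Here polynomial convexity enters: $\C\setminus K$ is then connected, $p_\mu$ is harmonic there, and $p_\mu(z)=\log|z|+o(1)$ as $z\to\infty$ forces $p_\mu$ to be non-constant, hence nowhere locally constant, on all of $\C\setminus K$ --- in particular on a neighbourhood of $A$. Applying \eqref{eq:upper} of \thmref{thmCenter} to $(q_{k_j})_j$ (a subsequence of a centering sequence, hence centering on $K$), the measure $\mu$, and the compact set $A$ gives, for any $\eps\in(0,d(A,K))$,
\[
  \limsup_{j\to\infty}(n_{k_j}-1)\mu^1_{k_j}(A)\ \le\ \limsup_{j\to\infty}\, n_{k_j}\mu_{k_j}(A_\eps)\ +\ M ,
\]
where $M<\infty$ is the number of critical points of $p_\mu$ in $A$, finite by the identity theorem since $p_\mu'$ is holomorphic and not identically zero on the connected set $\C\setminus K$.

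It remains only to see the right-hand side is finite, which closes the loop. Since $\eps<d(A,K)$, the set $\overline{A_\eps}=\{z:d(z,A)\le\eps\}$ is closed and disjoint from $K$, so clause~(2) of the centering hypothesis for $(q_k)_k$ bounds the number of zeros of $q_k$ in $\overline{A_\eps}$ by some $M_1$; thus $n_{k_j}\mu_{k_j}(A_\eps)\le M_1$ for large $j$, and hence $(n_{k_j}-1)\mu^1_{k_j}(A)\le M_1+M$ for large $j$, contradicting $(n_{k_j}-1)\mu^1_{k_j}(A)\to\infty$. This establishes clause~(2), so $(q_k')_k$ centers on $K$, and the induction delivers that $(q_k^{(m)})_k$ centers on $K$ for all $m\ge 1$. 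Beyond the subsequence device forced by the possible non-convergence of $\mu_k$, every step --- Gauss--Lucas, the automatic nowhere-local-constancy of $p_\mu$ off a polynomially convex compactum, the reduction to a compact $A$, and the bookkeeping with $\eps$ --- is routine, the substantial work having been carried out in \thmref{thmCenter}.
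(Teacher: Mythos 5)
Your proof is correct and follows essentially the same route as the paper: reduce by induction to the first derivative, verify clause (1) of Definition~\ref{def:center} via Gauss--Lucas, and for clause (2) argue by contradiction, passing to a subsequence with $\mu_{k_j}\wto\mu\in\BP(K)$, noting that polynomial convexity makes $p_\mu$ nowhere locally constant off $K$, and then invoking \eqref{eq:upper} of \thmref{thmCenter} together with the centering hypothesis to obtain a finite bound. You merely spell out in more detail the subsequence bookkeeping and the reduction to a compact $A$, which the paper leaves implicit.
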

\begin{proof}
  It is enough to prove the statement for the first derivative, since
  the general statement then follows by induction.
  Suppose to the contrary there exists a compact set $A$ which
  doesn't meet $K$, such that
  \[
    \limsup_{k\to\infty} (n_k-1)\mu^1_k(A) = \infty .
  \]
  By passing to a subsequence, we can suppose
  $\mu_k \wto \mu$,
  for some Borel probability measure $\mu$ supported on $K$.
  However, since $p_\mu$ is not locally constant outside $K$,
  the number of critical points $M$ of $p_\mu$ in $A$ is finite,
  so for all $\eps>0$ sufficiently small, we get
  \[
    \infty = \limsup_{k\to\infty} (n_k-1)\mu^1_k(A)
    \leq \limsup_{k\to\infty}\ n_k \mu_k(A_\eps) + M,
  \]
  contradicting that $(q_k)_k$ centers on $K$.
\end{proof}

%
\section{Proof of the Theorem B}
%
In this section we prove Theorem B, and the corollaries
\ref{cor:nowhereconst} and \ref{cor:nullinnerboundary}.

We start by two quite general lemmas.

\begin{lemma}\label{lem:almostIsSure}
  Suppose $u$ is a subharmonic function defined
  on the open set $G \subset \C$.
  Then, for every $z \in G$, it holds that
  \[
    \lim_{\eps \to 0} \frac{1}{\pi \epsilon^2}
    \int_{\D(z, \epsilon)} u\, d m_2
    = u(z).
  \]
  In particular, if $v$ is also subharmonic on $G$
  and satisfies $v \leq u$ $m_2$-almost everywhere in $G$
  then $v \leq u$ in $G$.
\end{lemma}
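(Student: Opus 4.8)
The plan is to prove the displayed identity by squeezing the disk average
\[
  A_\eps(u)(z) := \frac{1}{\pi\eps^2}\int_{\D(z,\eps)} u\, dm_2
\]
between $u(z)$ and values decreasing to $u(z)$, using only the two defining properties of subharmonicity: the circular sub-mean value inequality and upper semicontinuity. Throughout I would use the standard fact that a subharmonic function that is not identically $-\infty$ on a component of $G$ is locally $m_2$-integrable; if it is identically $-\infty$ on the component of $z$, both sides of the asserted identity are $-\infty$ and there is nothing to prove. In any case, since $u$ is upper semicontinuous it is bounded above on $\overline{\D}(z,\eps)$, so $\int_{\D(z,\eps)} u\, dm_2$ is a well-defined element of $[-\infty,+\infty)$.

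First I would establish the lower bound $A_\eps(u)(z) \geq u(z)$ for every $\eps$ with $\overline{\D}(z,\eps) \subset G$. For such $\eps$ the sub-mean value property gives $u(z) \leq \frac{1}{2\pi}\int_0^{2\pi} u(z+re^{i\theta})\, d\theta$ for all $r \in (0,\eps]$; multiplying by $r$, integrating in $r$ over $(0,\eps)$, and rewriting the area integral in polar coordinates yields $\pi\eps^2 u(z) \leq \int_{\D(z,\eps)} u\, dm_2$, which is the claim. Next I would establish the complementary upper bound from upper semicontinuity: given any real $M > u(z)$, choose $\delta \in (0, \operatorname{dist}(z,\partial G))$ with $u(w) < M$ for all $w \in \D(z,\delta)$; then for $0 < \eps < \delta$ we get $\int_{\D(z,\eps)} u\, dm_2 \leq \pi\eps^2 M$, i.e. $A_\eps(u)(z) \leq M$. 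Combining the two bounds gives $u(z) \leq A_\eps(u)(z) \leq M$ for all sufficiently small $\eps$; since $M > u(z)$ was arbitrary, $\lim_{\eps\to 0} A_\eps(u)(z) = u(z)$ (the same estimate forces the limit to be $-\infty$ in the case $u(z) = -\infty$). This is the first assertion of the lemma.

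For the ``in particular'' clause, fix $z \in G$ and $\eps$ with $\overline{\D}(z,\eps) \subset G$. Since $v \leq u$ holds $m_2$-almost everywhere on $G$, it holds $m_2$-almost everywhere on $\D(z,\eps)$, hence $\int_{\D(z,\eps)} v\, dm_2 \leq \int_{\D(z,\eps)} u\, dm_2$, so $A_\eps(v)(z) \leq A_\eps(u)(z)$. Letting $\eps \to 0$ and applying the identity just proved to $v$ and to $u$ gives $v(z) \leq u(z)$; as $z$ was arbitrary, $v \leq u$ on all of $G$.

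I do not expect a genuine obstacle here: this is a routine potential-theoretic fact, and the only points needing a little care are the bookkeeping around the value $-\infty$ and the tacit use of local integrability of subharmonic functions, which guarantees that the disk averages are honest real numbers so that the squeeze argument and the monotonicity of the integral are legitimate.
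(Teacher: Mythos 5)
Your proof is correct and follows essentially the same route as the paper's: the sub-mean value inequality gives the lower bound on the disk average, upper semicontinuity gives the upper bound, and the squeeze yields the limit, after which the ``in particular'' clause follows by monotonicity of the integral. Your extra care about the $-\infty$ case and local integrability is fine but does not change the argument.
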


\begin{proof}
  By the sub-mean property of subharmonic functions,
  there exists $\delta > 0$ such that 
  $u(z) \leq \frac{1}{2\pi}\int_0^{2\pi} u(z+\epsilon e^{i\theta})\, d\theta$,
  for every $0 < \epsilon<\delta$.
  It follows by Fubini's theorem, that when $0 < \epsilon < \delta$,
  we have
  \[
    u(z) \leq \frac{1}{\pi \epsilon^2} \int_{\D(z,\epsilon)} u\, d m_2
          \leq \sup_{0 < \absv{\zeta - z} < \epsilon} u(\zeta).
  \]
  Letting $\epsilon \to 0$, the right hand side will converge to
  $\limsup_{\zeta \to z} u(\zeta)$, a quantity that is bounded above
  by $u(z)$ by upper semicontinuity.
  Therefore we get
  $\lim_{\eps \to 0} \frac{1}{\pi \epsilon^2}
    \int_{\D(z, \epsilon)} u\, d m_2
    = u(z)$ as required.

  Suppose that also $v$ is subharmonic in $G$,
  and $v \leq u$ $m_2$-almost everywhere in $G$.
  We then have, for every $z\in G$, 
  \[ v(z) = 
    \lim_{\eps \to 0} \frac{1}{\pi \epsilon^2}
    \int_{\D(z, \epsilon)} v\, d m_2
    \leq 
    \lim_{\eps \to 0} \frac{1}{\pi \epsilon^2}
    \int_{\D(z, \epsilon)} u\, d m_2
    = u(z).
  \]
\end{proof}

The following lemma is a generalization of Lemma 3 in
\cite{BuffGauthier}.

\begin{lemma}\label{lem:subharmEqualEverywhere} 
  Suppose $u, v$ are subharmonic functions on $\C$,
  such that the generalized Laplacian $\Delta v$ has
  compact support.
  Also suppose $J \subset \C$ is compact.
  Furthermore suppose the connected components of $\C \setminus J$
  are divided into two families $(G_i)_i$ and $(H_j)_j$, such
  that
  \begin{enumerate}
    \item $G_1$ is the unbounded component,
    \item $u = v$ on $\bigcup_i G_i$,
    \item Every $z \in \bigcup_j \partial H_j$ is contained in the closure of a
      non-trivial component of $\bigcup_i \overline{G}_i$,
    \item $\Delta v(\bigcup_j \overline{H}_j) = 0$ and
    \item $m_2(J \setminus \left( \bigcup_i \partial G_i \cup \bigcup_j \partial H_j \right) )
      = 0$.
  \end{enumerate}
  Then $u = v$.
\end{lemma}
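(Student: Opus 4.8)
The plan is to show that $w := u - v$ is $\leq 0$ everywhere and $\geq 0$ everywhere, hence $0$. Both directions are symmetric in spirit but not literally, since only $v$ is assumed to have $\Delta v$ of compact support; the asymmetry will be handled by working with the subharmonic functions $u$ and $v$ separately rather than with their difference, and invoking Lemma~\ref{lem:almostIsSure}. First I would record the structure of $\C \setminus J$: it is the disjoint union of the open sets $\bigcup_i G_i$ (on which $u = v$ by hypothesis (2)) and $\bigcup_j H_j$, together with the compact set $J$ itself. By hypothesis (5), $m_2(J) $ is carried by $\bigcup_i \partial G_i \cup \bigcup_j \partial H_j$, which is a boundary set and hence $m_2$-null; so $m_2(J) = 0$ and $u = v$ holds $m_2$-a.e.\ on $\C \setminus \bigcup_j H_j$. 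Thus the whole problem reduces to controlling $u$ and $v$ on the bounded open sets $H_j$.

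The key step is to bound $u$ on each $H_j$ from above using the maximum principle for subharmonic functions, and to bound $v$ from below on $H_j$ using hypothesis (4). For the upper bound on $u$: on $\partial H_j$ we have, by hypothesis (3), that every boundary point lies in the closure of a non-trivial component $N$ of $\bigcup_i \overline{G}_i$. On the interior of that component $u = v$, and by continuity (upper semicontinuity together with the a.e.\ equality, or a direct limiting argument through $\D(z,\eps)$-averages as in Lemma~\ref{lem:almostIsSure}) one propagates the identity $u = v$ to $\partial H_j$ in an $m_2$-a.e.\ sense; combined with the fact that $v$ is usc and $u$ is usc, one concludes $u \le v$ on a dense (full-measure) subset of $\partial H_j$, and then on all of $\overline{H_j}$ via the maximum principle applied to $u - v$ — but this is exactly where the circularity threatens, so instead I would argue: since $\Delta v$ vanishes on $\overline{H_j}$ (hypothesis (4)), $v$ is \emph{harmonic} on each $H_j$; therefore $v$ is continuous there, and on $H_j$ the function $u$ is subharmonic while $v$ is harmonic, so $u - v$ is subharmonic on $H_j$. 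By the maximum principle, $\sup_{H_j}(u-v) \le \limsup_{z \to \partial H_j}(u-v)(z)$. Using hypothesis (3) to reach each boundary point from inside a non-trivial component of $\bigcup_i\overline G_i$ where $u=v$, together with upper semicontinuity of $u$ and continuity of the harmonic extension of $v$, gives $\limsup \le 0$, hence $u \le v$ on $H_j$. Doing this over all $j$ yields $u \le v$ $m_2$-a.e.\ on all of $\C$ (it already held off the $H_j$), so $u \le v$ everywhere by Lemma~\ref{lem:almostIsSure}.

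For the reverse inequality $v \le u$: here I would use that $v$ is harmonic on $\bigcup_j H_j$ and that $u = v$ on the unbounded component $G_1$, in particular near infinity $u$ and $v$ agree, so the subharmonic function $v - u$ is bounded near $\infty$ and equals $0$ there. Since $\Delta v$ has compact support, $v(z) = \log|z|\cdot(\text{const}) + O(1)$-type growth is controlled; more precisely $v - u$ is subharmonic on $\C$ minus the (compact) support considerations, but the cleanest route is: we have just shown $u \le v$ everywhere, so $v - u \ge 0$ is subharmonic wherever both are, and it vanishes $m_2$-a.e.\ on $\bigcup_i G_i \supset G_1$; a non-negative subharmonic function vanishing on the unbounded component, with the growth at infinity pinned down by the compact support of $\Delta v$ (so $v - u$ has a harmonic majorant tending to a finite limit at $\infty$), must be identically $0$ by the maximum principle / Liouville-type argument. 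I expect the \textbf{main obstacle} to be precisely the boundary propagation in hypothesis (3): one must carefully justify that the $m_2$-a.e.\ equality $u = v$ on the non-trivial components of $\bigcup_i \overline{G}_i$ upgrades to a pointwise $\limsup$ statement at points of $\bigcup_j\partial H_j$ — the non-triviality of the component is exactly what makes this work (a single-point component gives no room to take averages), and getting the quantifiers right there, rather than the maximum-principle bookkeeping, is the delicate part.
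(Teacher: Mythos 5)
There are genuine gaps in both halves of your argument, and they sit exactly where the paper has to work hardest. In the direction $u\le v$: even granting that $u=v$ propagates from $\bigcup_i G_i$ to every point of $\partial H_j$ (which itself is not achieved by ``averaging over $\D(z,\eps)$'' --- the non-trivial component of $\bigcup_i\overline G_i$ may be a curve of zero area; the correct tool is fine continuity of subharmonic functions at points where a connected set with more than one point is non-thin, i.e.\ \cite[Theorem 3.8.3]{Ransford}, which is what the paper uses), the classical maximum principle for the subharmonic function $u-v$ on $H_j$ requires $\limsup_{H_j\ni\zeta\to z}(u-v)(\zeta)\le 0$. Upper semicontinuity of $u$ gives $\limsup u(\zeta)\le u(z)=v(z)$, but you also need $\liminf_{H_j\ni\zeta\to z}v(\zeta)\ge v(z)$, i.e.\ lower semicontinuity of $v$ at $z$ from within $H_j$ --- and this can fail: $\Delta v$ may have atoms accumulating on $\partial H_j$ from outside $\overline H_j$, making $v$ dip to $-\infty$ along a sequence in $H_j$ tending to $z$ while $v(z)$ is finite. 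The paper sidesteps this by gluing $\max(u,v)$ on $H_j$ with $v$ on $\C\setminus H_j$ (the Gluing Theorem only needs the one-sided usc inequality, which does hold once $u=v$ on $\partial H_j$), then comparing total Riesz masses via a cutoff supported in $G_1$ and invoking Weyl's Lemma to conclude $\max(u,v)=v$ on $H_j$. Your side remark that $m_2(J)=0$ because boundaries of open sets are null is also false (boundaries can have positive area); hypothesis (5) is there precisely because this is not automatic.

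The direction $v\le u$ is where the proposal actually breaks. You call $v-u$ subharmonic, but a difference of subharmonic functions is not subharmonic (on $H_j$ it is superharmonic, since $v$ is harmonic there), so the ``nonnegative subharmonic function vanishing near $\infty$ must vanish identically'' conclusion has no basis. More tellingly, your argument for this direction never uses the full strength of hypothesis (4), and without it the statement is false: take $u(z)=\log|z|$, $v(z)=\max(\log|z|,0)$, $J=\partial\D$, $G_1=\{|z|>1\}$, $H_1=\D$. Then $u,v$ are subharmonic with $\Delta v$ compactly supported, $v$ is harmonic on $H_1$, $u=v$ on $G_1$, $v-u\ge 0$ vanishes on the unbounded component --- yet $v-u=-\log|z|>0$ on $\D$. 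The only hypothesis violated is (4), because $\Delta v$ charges $\partial H_1=\partial\D$. So the essential content of this direction is the mass-balance argument: gluing $u$ on $\overline H_j$ with $v$ outside, showing the total Laplacian masses agree (using $u=v$ near $\infty$), and deducing from $\Delta v(\overline H_j)=0$ that the glued function has the same Riesz measure as $v$, whence Weyl's Lemma forces equality. None of this appears in the proposal, so the second half cannot be repaired along the lines you indicate.
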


Obviously, the conditions in the lemma are closely related to the
requirements for a measure to be noble.

\begin{proof}
  The proof follows the proof of \cite[Lemma 3]{BuffGauthier} very closely.

  If $v \equiv -\infty$, the lemma is trivially true by
  \cite[Theorem 3.5.1]{Ransford}.

  We first show $u \leq v$ on $\C$.
  Since $u = v$ on $G_1$ (by assumption (1) and (2)),
  also $\Delta u$ has compact support.
  Since each $G_i$ is connected, it follows by
  \cite[Theorem 3.8.3]{Ransford}, that for every
  $z \in \overline{G}_i$, we have
  \[
    u(z) = \limsup_{\substack{\zeta \in G_i\\ \zeta \to z}} u(\zeta)
    = \limsup_{\substack{\zeta \in G_i\\ \zeta \to z}} v(\zeta)
    = v(z),
  \]
  and it follows that $u = v$ on each $\overline{G}_i$.
  By assumption (3) and using \cite[Theorem 3.8.3]{Ransford},
  we also have $u=v$ on $\partial H_j$.

  Since $u$ and $v$ agree on the boundary of $H_j$, it follows by
  the Gluing Theorem \cite[Theorem 2.4.5]{Ransford} that
  \[
    w_1 = \begin{cases}
      \max(u,v) & \text{on }H_j, \\
      v & \text{on }\C\setminus H_j.
    \end{cases}
  \]
  is subharmonic.
  Let $\chi : \C \to [0,1]$ be a smooth function with
  compact support, which is equal to $1$ on a neighborhood
  of the support of $\Delta u$ union the support of $\Delta v$
  union $\C\setminus G_1$.
  Then $\Delta \chi$ is supported in $G_1$ and we have
  \[
    \Delta v(\C) = \int_\C \chi\, d \Delta v
    = \int_\C  v\, d \Delta \chi
    = \int_\C  w_1\, d \Delta \chi
    = \int_\C \chi\, d\Delta w_1
    = \Delta w_1(\C).
  \]
  Using that $\Delta v = \Delta w_1$ outside $\overline{H}_j$ and
  assumption (4), we get
  \[
    \Delta w_1(\overline{H}_j)
    = \Delta w_1(\C) - \Delta w_1(\C\setminus \overline{H}_j)
    = \Delta v(\C) - \Delta v(\C\setminus \overline{H}_j)
    = \Delta  v(\overline{H}_j)
    = 0.
  \]
  It follows that $\Delta v = \Delta w_1$ on $\C$.
  By Weyl's Lemma (see \cite[Lemma 3.7.10]{Ransford}),
  $v = w_1 + h$, where $h$ is harmonic.
  Since $v = w_1$ on $G_1$, we get $h \equiv 0$.
  So $v = w_1$ on $\C$, and $u \leq v$ on $H_j$.
  Recalling $u = v$ on $\partial H_j$, we see $u \leq v$
  on $\overline{H_j}$.
  Since this holds for arbitrary $H_j$, we get
  $u \leq v$ on $(\bigcup_i{\overline{G}_i}) \cup (\bigcup_j \overline{H_j})$.
  By assumption (5), this translates to $u \leq v$ $m_2$-almost
  everywhere, and by Lemma \ref{lem:almostIsSure}, we get
  $u \leq v$ on $\C$.

  What remains is to prove $v \leq u$.
  Again, we can deploy the Gluing Theorem and construct a subharmonic
  function $w_2$ by
  \[
    w_2 = \begin{cases}
      \max(u,v) & \text{on } \C \setminus \overline{H}_j \\
      u & \text{on } \overline{H}_j
    \end{cases}
      = \begin{cases}
        v & \text{on }\C \setminus H_j, \\
        u & \text{on } \overline{H}_j.
    \end{cases}
  \]
  For the second inequality, we used that $\max(u,v)=v$.
  Arguing as before, $\Delta w_2(\C) = \Delta v(\C)$.
  We also have $\Delta v(\overline{H}_j) = 0$ and
  $\Delta v = \Delta w_2$ outside $\overline{H}_j$,
  so $\Delta v = \Delta w_2$.
  By Weyl's Lemma $v = w_2 + h$, where $h$ is harmonic,
  and since $v = w_2$ on $G_1$, $v = w_2$.
  So $u = v$ on $H_j$, and since this holds on every $H_j$,
  $u = v$ $m_2$-almost everywhere.
  By Lemma \ref{lem:almostIsSure}, $v \leq u$ on $\C$.
\end{proof}

We are now ready to prove Theorem B.

\begin{proof}[Proof of Theorem B.] 
  We suppose the hypotheses in the theorem are satisfied.
  To alleviate the notation, we set
  $\nu_k = \mu^1_k = \mu_{q_k'}$.

  First note that it is enough to prove the result for the first
  derivative, since the general result then follows by induction.

  Theorem \ref{thmCenter} shows that $(q_k')_k$ centers on $C$,
  so we just need to show that $\nu_k \wto \mu$.
  This is purely a statement on the distribution of zeros, so we can
  suppose that $q_k$ is monic for every $k$ for convenience.

  Since we know that $(q_k')_k$ centers on $C$, the sequence
  $(\nu_k)_k$ is precompact with respect to weak-star
  convergence.
  It is therefore enough to show, that for any convergent
  subsequence, we can find a subsequence that converges to $\mu$.

  Passing to a subsequence such that $\nu_k \wto \nu$,
  we must show $\nu = \mu$.
  Our strategy is to show that $p_\nu = p_\mu$ on every $U_i$
  and every $W_l$
  and then apply
  Lemma \ref{lem:subharmEqualEverywhere}.

  Totik showed, \cite[step IV in the proof of Theorem 1.6]{TotikTAMS2019},
  that $p_\nu = p_\mu$ on each $W_l$.
  
  By centering, $\supp \nu \subset C$.
  We can also suppose that none of the polynomials $q_k'$ is equal to
  the zero polynomial.
  Write $q_k = \prod_{j=1}^{n_k}(z-z_{k,j})$ and
  $q_k' = n_k \prod_{j=1}^{n_k-1} (z - \zeta_{k,j})$.
  We can number the roots so that
  $d(z_{k,1}, C) \geq d(z_{k, 2}, C)
    \geq \cdots \geq d(z_{k, n_k}, C)$
  for all $k$, and similarly for the roots of $q_k'$.
  Arguing as in the proof of Theorem \ref{thmCenter},
  we can pass to a subsequence so that the sequences
  $(z_{k,j})_k$ and $(\zeta_{k,j})_k$ converge for every $j$.
  Set $z_j = \lim_{k\to\infty}z_{k,j}$
  and $\zeta_j = \lim_{k\to\infty} \zeta_{k,j}$.
  By Lemma \ref{lem:weakuniform}, $\frac{1}{n_k} \log \absv{q_k} \to
    p_\mu$ locally uniformly in $\bigcup_i U_i \setminus \{ z_j : j=1,2,
    \ldots\}$.
  Applying the same lemma to $q_k'$, we obtain
  \[
    \lim_{k \to \infty} \frac{1}{n_k} \log \absv{q_k'}
    = \lim_{k\to \infty} \frac{1}{n_k-1} \log \absv{q_k'}
    = \lim_{k \to \infty} \frac{1}{n_k-1} \log
    \absv{\frac{1}{n_k}q_k'}
    = p_\nu
  \]
  locally uniformly on $\bigcup_i U_i \setminus \{ \zeta_j : j=1,2, \ldots\}$.

  Also by Lemma \ref{lem:weakuniform}, for
  $z \in \bigcup_i U_i \setminus \{ z_j : j=1,2, \ldots\}$, we have
  $\frac{1}{n_k}\frac{q_k'(z)}{q_k(z)} = p_\mu'(z) + o(1)$ as $k\to \infty$.
  Hence, if $p_\mu'(z) \neq 0$, we have
  $\frac{1}{n_k}\frac{q_k'(z)}{q_k(z)} = p_\mu'(z)(1+ o(1))$.
  Taking the logarithm and dividing by $n_k$, we get
  $\frac{1}{n_k} \log \absv{q_k'(z)} 
  = \frac{1}{n_k} \log \absv{q_k(z)} + o(1)$.
  In the limit, we get
  $p_{\nu}(z) = p_{\mu}(z)$, for every
  $z \in \bigcup_i U_i \setminus E$,
  where
  \[
    E = \{ \zeta_j : j=1,2, \ldots\} \cup
    \{z_j : j=1,2, \ldots\} \cup
    \{z \in \bigcup_i U_i : p_\mu'(z) = 0\}.
  \]

  Now $U_i \setminus E$ is a connected set, whose closure equals $\overline{U}_i$.
  Hence $U_i \setminus E$ is non-thin 
  at every point of $\overline{U}_i$ by \cite[Theorem 3.8.3]{Ransford}. 
  Thus $p_\mu = p_\nu$ on every set $\overline{U}_i$.
  Similarly, $p_\mu = p_\nu$ on every set $\overline{W}_l$.

  We can therefore
  denote the components of $\bigcup_i U_i \cup \bigcup_l W_l$ by $(G_i)_i$,
  set  $(H_j)_j = (V_j)_j$ and $J = \partial S$
  and apply Lemma \ref{lem:subharmEqualEverywhere} with $v=p_\mu$ og $u=p_\nu$.
  We get that $p_\mu = p_\nu$ on all of $\C$.
  By uniqueness $\mu = \nu$,
  see e.g.~\cite[Theorem 2.10]{Saff}.
\end{proof}

To prove Corollary \ref{cor:nowhereconst}, 
we let $(U_i)_i$ denote the connected components of $\C \setminus S$
(so $(V_j)$ is the empty family).
Assumptions (1) to (6) in Theorem B are readily verified,
so the conclusion of the corollary follows.

To prove Corollary \ref{cor:nullinnerboundary},
let $U_1 = \hat{\Omega} \cap \C$ denote the unbounded component
of $\C\setminus S$, and let $(V_j)_j$ be the rest
of the connected components of $\C\setminus S$.
Since $(q_k)$ centers on $S$, the support of $\mu$
is contained in $S$.
Assumption (1) to (6) in Theorem B now follow easily.

\section{The sequence of antiderivatives}

We have seen, that centering of $(q_k)_k$ on $S$ will,
under certain conditions,
force the derivatives $(q_k')_k$ to also center on $S$.

However, there is no reason to think,
that the centering of $(q_k)_k$ on $S$,
will force a sequence of antiderivatives $(Q_k)_k$ to center on $S$.
If it is the case, the asymptotic root distribution of $(Q_k)_k$
is the same as that of $(q_k)_k$.

\begin{corollary}
  Suppose $S \in \C$ is compact and $m_2(S) = 0$.
  Also suppose $(q_k)_k$ centers on $S$, and $\mu_k \wto \mu$,
  If a sequence of antiderivatives $(Q_k)_k$ centers on $S$,
  then $\mu_{Q_k} \wto \mu$.
\end{corollary}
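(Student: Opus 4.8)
The plan is to mimic the proof of Theorem~B, the new ingredient being a pointwise identity that ties $q_k$ to $Q_k$. Write $n_k=\deg q_k$, so $\deg Q_k=n_k+1$. Since $(Q_k)_k$ centers on $S$, the family $(\mu_{Q_k})_k$ is precompact with every limit point in $\BP(S)$, so it suffices to fix a subsequence along which $\mu_{Q_k}\wto\nu$ for some $\nu\in\BP(S)$ and to prove $\nu=\mu$. As in the proof of Theorem~\ref{thmCenter} I pass to a further subsequence along which each root of $q_k$ and each root of $Q_k$ converges, and I let $E\subset\C\setminus S$ be the (locally finite) set of these limit points together with the zeros of $p_\mu'$ in $\C\setminus S$. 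Exactly as in Theorem~B, the goal is to show $p_\nu=p_\mu$ on $\C$, whence $\nu=\mu$ by uniqueness of logarithmic potentials.

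On any open $U\subset\C\setminus S$ disjoint from $E$ we have $q_k,Q_k\neq 0$ on $U$ for $k$ large, and Lemma~\ref{lem:weakuniform}, applied to $q_k$ and to $Q_k$, gives locally uniformly on $U$
\[
  \frac{q_k'}{n_kq_k}\ \longrightarrow\ p_\mu',
  \qquad
  h_k:=\frac{q_k}{(n_k+1)Q_k}=\frac{Q_k'}{(n_k+1)Q_k}\ \longrightarrow\ p_\nu'.
\]
Because $Q_k'=q_k$, we have on $U$ for $k$ large
\[
  \frac{q_k'}{q_k}-\frac{q_k}{Q_k}
  \ =\ \Bigl(\log\frac{q_k}{Q_k}\Bigr)'
  \ =\ \frac{h_k'}{h_k}.
\]
At any $z_0\in U$ with $p_\nu'(z_0)\neq 0$ the functions $h_k$ are non-vanishing near $z_0$ for $k$ large, so $h_k'/h_k\to(p_\nu')'/p_\nu'$ locally uniformly there and in particular stays bounded uniformly in $k$; dividing the second display by $n_k$ and letting $k\to\infty$ gives $p_\mu'(z_0)=p_\nu'(z_0)$. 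By the identity theorem, $p_\mu'=p_\nu'$ on every connected component $W$ of $\C\setminus S$ with $p_\nu'\not\equiv 0$, so $p_\mu=p_\nu$ on each such $W$ up to an additive constant; this constant is $0$ on the unbounded component $\Omega$ because $p_\mu(z)=\log|z|+o(1)=p_\nu(z)$ as $z\to\infty$, and it is matched on the remaining ones by propagating along shared boundary points via \cite[Theorem~3.8.3]{Ransford}.

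It remains to treat the components of $\C\setminus S$ on which $p_\nu$ is locally constant. Collecting these as $(H_j)_j$ and the rest as $(G_i)_i$ (so $G_1=\Omega$), and putting $J=S$, $v=p_\mu$, $u=p_\nu$, I would apply Lemma~\ref{lem:subharmEqualEverywhere}: its conditions (1) and (2) have just been arranged, and condition (5) is immediate because the exceptional set there is contained in $S$ and $m_2(S)=0$ — this is exactly where the hypothesis $m_2(S)=0$ is used. Granting the lemma we get $p_\mu=p_\nu$ on $\C$, hence $\mu=\nu$.

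The step I expect to be the real obstacle is verifying conditions (3) and (4) of Lemma~\ref{lem:subharmEqualEverywhere} for the family $(H_j)_j$, above all condition (4), which here reads $\mu\bigl(\bigcup_j\overline{H}_j\bigr)=0$: $\mu$ may charge the boundary of no locally-constant component of $\C\setminus S$. This does not follow from centering and $m_2(S)=0$ alone, and should be supplied as an extra hypothesis — for instance $\mu(\partial V)=0$ for every bounded component $V$ of $\C\setminus S$, in the spirit of Corollary~\ref{cor:nullinnerboundary}, or, most cleanly, that $S$ be polynomially convex. In the latter case $\C\setminus S$ is connected, the family $(H_j)_j$ is empty, conditions (3)--(4) are vacuous, and the argument closes verbatim; indeed then $p_\nu$ is automatically nowhere locally constant, so one may bypass Lemma~\ref{lem:subharmEqualEverywhere} and simply apply Corollary~\ref{cor:nowhereconst} to the centering sequence $(Q_k)_k$, obtaining $\mu_{q_k}=\mu_{Q_k'}\wto\nu$ and hence $\nu=\mu$.
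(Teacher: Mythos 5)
The paper's own proof is two sentences: it applies Theorem~B to the sequence $(Q_k)_k$ itself. Since $Q_k'=q_k$, Theorem~B, applied along a subsequence with $\mu_{Q_k}\wto\nu$, yields $\mu_{q_k}\wto\nu$, and since $\mu_{q_k}\wto\mu$ this forces $\nu=\mu$; precompactness finishes the argument. You only notice this reduction in your last sentence (and only in the polynomially convex case, via Corollary~\ref{cor:nowhereconst}); the bulk of your argument instead re-runs the proof of Theorem~B by hand. That route is viable, but your implementation of the key step is more fragile than it needs to be: by differentiating $\log(q_k/Q_k)$ you only obtain $p_\mu'=p_\nu'$, and you are then forced to match integration constants component by component by ``propagating along shared boundary points'' --- a chain of shared boundary points back to $\Omega$ need not exist for a general compact $S$. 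The fix is to imitate the corresponding step in the paper's proof of Theorem~B directly: at a point $z$ off the exceptional set with $p_\nu'(z)\neq 0$ one has $q_k(z)/((n_k+1)Q_k(z))\to p_\nu'(z)\neq 0$ by Lemma~\ref{lem:weakuniform}, so $\log\absv{q_k(z)}-\log\absv{Q_k(z)}=\log(n_k+1)+O(1)$; dividing by $n_k$ gives $p_\mu(z)=p_\nu(z)$ outright, with no constants to match, on every component where $p_\nu'\not\equiv 0$.

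Your flagged obstacle --- conditions (3) and (4) of Lemma~\ref{lem:subharmEqualEverywhere} for the components where $p_\nu$ is locally constant --- is real, but it is not something the paper's proof resolves and you overlooked: it is exactly the requirement that the relevant limit measure be noble, which is the hypothesis of Theorem~B and which $m_2(S)=0$ plus centering does not supply. For instance, if $S=\partial\D$ and $\nu$ is the equidistribution on $\partial\D$, then $\nu$ is noble for no admissible division ($p_\nu$ is constant on $\D$, so $\D$ cannot be a $U_i$, while $\nu(\overline{\D})=1$ rules out $\D$ being a $V_j$), so the literal invocation of Theorem~B in the paper is not justified either. The corollary is evidently meant to be read with nobility of the relevant measures in force, or with an extra hypothesis of the kind you propose. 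In short: you missed the intended one-line derivation from Theorem~B, your direct argument has a repairable weak point in the constant-matching step, and the gap you identify is genuine but is shared by (indeed inherited from) the paper's own proof rather than being a defect specific to yours.
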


\begin{proof}
  By Theorem B, any limit point of a subsequence of
  $\mu_{Q_k}$ must be equal to $\mu$.
  Hence the result follows by precompactness of
  $\mu_{Q_k}$.
\end{proof}

Let us now revisit Example \ref{ex:cantor},
and the sequence of polynomials $(q_k)_k$ constructed in this
example.
We claim that no sequence of antiderivatives $(Q_k)_k$ can center on
$K$.

Let $Z_0$ be the zero locus of $q_0$.
Then $Z_{k+1} = h_1(Z_k) \cup h_2(Z_k)$,
when we set $h_1(z) = z/3$ and $h_2(z) = (z+2)/3$.
We can construct an endomorphism $F$ of the compact subsets of
$\C$, by letting $F(C) = h_1(C) \cup h_2(C)$.
This endomorphism is a contraction mapping with respect to the Hausdorff
metric, and $Z_k = F^{k}(Z_0)$ converges geometrically
towards $K$, see the Example 1 in \cite[Section 3.3]{Hutchinson}.

It follows that $\D(0, \frac{1}{12})$,
contains no zeros of $q_k$, for $k$ sufficiently large.
However, by symmetry, $0$ is a critical point of $p_\mu$.
If it were the case, that $(Q_k)_k$ center on $K$,
then $q_k$ should have at least one zero in $\D(0, \frac{1}{12})$
for $k$ sufficiently large, by Theorem \ref{thmCenter}.

\paragraph{Acknowledgments}
The authors would like to thank the
Danish Council for Independent Research |  Natural Sciences for
support via the grant DFF--1026--00267B.

\bibliographystyle{plain}

\bibliography{limitmeasure}

\end{document}